\documentclass[11pt,reqno]{amsart}
\usepackage{amsaddr}
\usepackage{graphicx}
\usepackage{mathtools}
\usepackage{pgfplots}
\usepackage{orcidlink}
\usepackage{geometry}
\usepackage{stix}
\usepackage{caption}
\usepackage{amsmath,amsthm}
\usepackage{cite}
\usepackage{hyperref}
\hypersetup{
    colorlinks=true, 
    linktoc=all,     
    allcolors=blue,  
}

\newtheorem{theorem}{Theorem}[section]
\newtheorem{lemma}{Lemma}[section]
\newtheorem{proposition}{Proposition}[section]
\newtheorem{corollary}{Corollary}[section]

\theoremstyle{definition}

\newtheorem{example}{Example}[section]
\newtheorem{remark}{Remark}[section]
\numberwithin{equation}{section}
\pgfplotsset{compat=1.18}
\definecolor{qqzzqq}{rgb}{0.,0.6,0.}
\definecolor{zzttqq}{rgb}{0.6,0.2,0.}
\definecolor{wewdxt}{rgb}{0.45,0.45,0.45}
\definecolor{ududff}{rgb}{0.3,0.3,1.}

\begin{document}
\pagenumbering{arabic}

\title[A Generalized Euler Identity]{A Vector Generalization of Euler's Quadrilateral Theorem}
\author{Mohammad Hassan Murad\orcidlink{0000-0002-8293-5242}}
\address{Department of Mathematics\\
The University of Texas at Arlington, Arlington, TX, USA}
\email{mohammad.murad2@uta.edu}

\begin{abstract}
In this paper, we develop a unified algebraic framework for Euler-type identities in real and complex inner product spaces. Starting from the parallelogram identity, we derive Apollonius' identity and recover Euler's classical theorem. We then establish a general Euler-type identity valid for every finite collection of $n\geq 4$ vectors. The proof is based on a combinatorial analysis of pairwise distances. The resulting identity recovers Euler's theorem when $n=4$. Several previously known identities thus arise naturally within a single algebraic framework.
\end{abstract}

\keywords{Parallelogram identity; Apollonius' identity; Euler's theorem; quadrilaterals; inner product spaces}
\subjclass[2020]{Primary: 51M04, 51N20; Secondary: 46C05}


\maketitle

\section{Introduction}\label{sec:intro}
\noindent
Metric identities in Euclidean geometry often admit natural formulations in terms of vectors and inner products. Among the most fundamental are the parallelogram theorem and Apollonius' \,theorem. These identities relate distances between finite point configurations and reflect \,characteristic properties of inner product spaces.

\begin{theorem}[Parallelogram Theorem]\label{thm:classicpar}
Let $ABCD$ be a parallelogram. Then
\begin{equation}\label{eq:classpar}
2\bigl(|AB|^2+|BC|^2\bigr)=|AC|^2+|BD|^2.
\end{equation}
\end{theorem}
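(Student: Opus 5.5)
We will prove Theorem~\ref{thm:classicpar} by translating it into vector form and expanding norms via the inner product. The plan is to place the parallelogram so that its vertices are expressed through two edge vectors, rewrite the diagonals in terms of those vectors, and then expand.

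First, set $\mathbf{u}=\overrightarrow{AB}$ and $\mathbf{v}=\overrightarrow{BC}$. Because $ABCD$ is a parallelogram, we have $\overrightarrow{AD}=\overrightarrow{BC}=\mathbf{v}$; this is the only place the defining property of a parallelogram enters. It then follows that
\[
\overrightarrow{AC}=\mathbf{u}+\mathbf{v},
\qquad
\overrightarrow{BD}=\overrightarrow{AD}-\overrightarrow{AB}=\mathbf{v}-\mathbf{u}.
\]
Consequently $|AB|=\|\mathbf{u}\|$, $|BC|=\|\mathbf{v}\|$, $|AC|=\|\mathbf{u}+\mathbf{v}\|$ and $|BD|=\|\mathbf{u}-\mathbf{v}\|$.

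Second, expand $\|\mathbf{u}\pm\mathbf{v}\|^2=\langle \mathbf{u}\pm\mathbf{v},\mathbf{u}\pm\mathbf{v}\rangle$ using bilinearity and symmetry of the Euclidean inner product:
\[
\|\mathbf{u}\pm\mathbf{v}\|^2=\|\mathbf{u}\|^2\pm2\langle\mathbf{u},\mathbf{v}\rangle+\|\mathbf{v}\|^2 .
\]
Adding the two expansions, the cross terms cancel and we obtain $\|\mathbf{u}+\mathbf{v}\|^2+\|\mathbf{u}-\mathbf{v}\|^2=2(\|\mathbf{u}\|^2+\|\mathbf{v}\|^2)$. This is exactly \eqref{eq:classpar}.

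There is no real obstacle here. The only point needing care is setting up the vector correspondence correctly, in particular identifying $\overrightarrow{BD}=\mathbf{v}-\mathbf{u}$ from $\overrightarrow{AD}=\overrightarrow{BC}$, with the vertices labeled in cyclic order. If one later wants the complex inner product setting that the paper mentions, the cross terms become $2\,\mathrm{Re}\langle\mathbf{u},\mathbf{v}\rangle$, and they still cancel in the same way.
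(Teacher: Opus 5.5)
Your proof is correct and takes essentially the same approach as the paper, which treats Theorem~\ref{thm:classicpar} as the geometric form of the parallelogram identity \eqref{eq:pariden} with $x=\overrightarrow{AB}$, $y=\overrightarrow{BC}$, and leaves its proof as exactly the inner-product expansion you carry out. Using $\overrightarrow{AD}=\overrightarrow{BC}$ to obtain $\overrightarrow{AC}=\mathbf{u}+\mathbf{v}$ and $\overrightarrow{BD}=\mathbf{v}-\mathbf{u}$ is the only translation step needed, and you do it correctly.
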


\begin{theorem}[Apollonius]\label{thm:classicapollo}
If $AD$ is a median of a triangle $ABC$, then
\[
|AB|^2+|AC|^2
=
2|AD|^2+\frac12|BC|^2.
\]
\end{theorem}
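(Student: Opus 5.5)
The plan is to derive Apollonius' theorem from the Parallelogram Theorem (Theorem~\ref{thm:classicpar}) by completing the triangle to a parallelogram. Let $D$ be the midpoint of $BC$. Reflect $A$ through $D$ to obtain a point $E$, so that $D$ is also the midpoint of $AE$. The diagonals $AE$ and $BC$ of the quadrilateral $ABEC$ then bisect each other at $D$. Hence $ABEC$ is a parallelogram, with $|BE|=|AC|$ and $|EC|=|AB|$.

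Next I apply \eqref{eq:classpar} to the parallelogram $ABEC$. Its sides are $AB$ and $BE$, and its diagonals are $AE$ and $BC$. This gives
\[
2\bigl(|AB|^2+|BE|^2\bigr)=|AE|^2+|BC|^2 .
\]
Substitute $|BE|=|AC|$ and $|AE|=2|AD|$. The right-hand side becomes $4|AD|^2+|BC|^2$. Dividing by $2$ yields exactly $|AB|^2+|AC|^2=2|AD|^2+\tfrac12|BC|^2$.

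The only step needing justification is that a quadrilateral whose diagonals bisect each other is a parallelogram. In vector form this is immediate. Put $D$ at the origin and write $\vec{B}=b$, so $\vec{C}=-b$. Write $\vec{A}=a$, so $\vec{E}=-a$. Then $\vec{BE}=-a-b=\vec{AC}$, and $\vec{EC}=a-b=\vec{AB}$.

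Alternatively, the whole statement can be checked directly with inner products in this coordinate system:
\[
|a-b|^2+|a+b|^2=2|a|^2+2|b|^2 .
\]
Here $|AD|=|a|$ and $|BC|=|2b|$, so $2|b|^2=\tfrac12|BC|^2$. This is simply the parallelogram law for the vectors $a$ and $b$, which makes the derivation entirely algebraic. I expect no real obstacle; the only care needed is matching the sides and diagonals correctly when invoking Theorem~\ref{thm:classicpar}.
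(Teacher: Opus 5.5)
Your argument is correct and is essentially the paper's. The paper proves the vector form (Theorem~\ref{thm:apollonius}) by applying the parallelogram identity to $x-z$ and $y-z$, i.e.\ to $\vec{AB}$ and $\vec{AC}$, whose sum is $2\vec{AD}$ and whose difference is $\vec{CB}$; this is exactly your parallelogram $ABEC$ with diagonals $AE=2AD$ and $BC$. There is one harmless sign slip: $\vec{EC}=a-b=\vec{BA}$, not $\vec{AB}$, but your argument only uses $\vec{BE}=\vec{AC}$ and lengths, so nothing is affected.
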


In 1748, Euler established a remarkable identity for quadrilaterals that relates the sums of the squares of the side lengths, the diagonals, and the segment joining the midpoints of the diagonals.

\begin{theorem}[Euler, 1748]\label{thm:classiceuler}
Let $ABCD$ be a quadrilateral, and let $L$ and $M$ be the midpoints of the diagonals $AC$ and $BD$, respectively. Then
\[
|AB|^2+|BC|^2+|CD|^2+|DA|^2
=
|AC|^2+|BD|^2+4|LM|^2.
\]
\end{theorem}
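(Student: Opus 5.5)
We prove Theorem~\ref{thm:classiceuler} by applying Apollonius' theorem (Theorem~\ref{thm:classicapollo}) three times: twice in triangles that have a diagonal as a side and share the midpoint of the other diagonal, and once in the triangle formed by those two segments. This matches the paper's approach of deriving Euler from Apollonius. A direct vector computation will serve as a check.

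First, $M$ is the midpoint of $BD$. So $AM$ is a median of triangle $ABD$ and $CM$ is a median of triangle $CBD$. Theorem~\ref{thm:classicapollo} gives
\[
|AB|^2+|AD|^2 = 2|AM|^2+\tfrac12|BD|^2, \qquad |CB|^2+|CD|^2 = 2|CM|^2+\tfrac12|BD|^2 .
\]
Adding these, the left side is exactly the sum of the four squared sides $|AB|^2+|BC|^2+|CD|^2+|DA|^2$. The right side is $2\bigl(|MA|^2+|MC|^2\bigr)+|BD|^2$.

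Next, $L$ is the midpoint of $AC$, so $ML$ is a median of triangle $MAC$. Applying Theorem~\ref{thm:classicapollo} again gives
\[
|MA|^2+|MC|^2 = 2|ML|^2+\tfrac12|AC|^2 .
\]
Substituting into the previous sum yields $2\bigl(2|LM|^2+\tfrac12|AC|^2\bigr)+|BD|^2 = |AC|^2+|BD|^2+4|LM|^2$, which is the claimed identity.

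The main obstacle is degenerate configurations, where $M$ lies on line $AC$ (so triangle $MAC$ collapses) or $M=L$ (as in a parallelogram). The Apollonius identity still holds there, but Theorem~\ref{thm:classicapollo} is stated for genuine triangles. To handle this cleanly, I would redo each step as a vector identity. Put the origin anywhere and write position vectors $a,b,c,d$, so $l=\frac{a+c}{2}$ and $m=\frac{b+d}{2}$. Then verify directly that
\[
|a-b|^2+|b-c|^2+|c-d|^2+|d-a|^2-|a-c|^2-|b-d|^2 = |a+c-b-d|^2 = 4|l-m|^2 .
\]
Expanding both sides shows that each equals $|a|^2+|b|^2+|c|^2+|d|^2$ plus the cross terms $2\langle a,c\rangle+2\langle b,d\rangle-2\langle a,b\rangle-2\langle b,c\rangle-2\langle c,d\rangle-2\langle d,a\rangle$. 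This computation covers every case uniformly and works in any real inner product space.
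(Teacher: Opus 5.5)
Your proof is correct and follows the paper's argument: the paper applies Apollonius' identity to the triples $x_1,x_2,x_4$ and $x_2,x_3,x_4$ (medians from $A$ and $C$ to $M=\bar{x}_{24}$), adds the results, and then applies the identity once more to $x_1,x_3,\bar{x}_{24}$. Your worry about degenerate triangles does not arise in the paper, because it uses the vector form \eqref{eq:apollonius}, which holds for arbitrary $x,y,z$; your closing direct expansion is therefore a valid but unnecessary safeguard.
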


Euler's theorem may be regarded as a natural extension of the parallelogram identity. Indeed, a quadrilateral is a parallelogram if and only if the midpoints of its diagonals coincide, in which case Euler's theorem reduces precisely to \eqref{eq:classpar}.

Motivated by an exposition of Dunham \cite{Dunham2000}, Dence and Dence \cite{Dence-Dence2002} revisited Euler's theorem for convex quadrilaterals, while Kandall \cite{Kandall2002} gave a concise vector proof and observed that the \,identity remains valid for arbitrary point configurations in $\mathbb{R}^m$. See Figure \ref{fig:eulerinrm}. The same identity was \,established much earlier by Small \cite{Small1929} as an algebraic relation in the complex plane.

\begin{figure}
    \centering
\begin{tikzpicture}[scale=1]
\clip(-4,-6) rectangle (9,6);
\fill[line width=1.pt,color=zzttqq,fill=zzttqq,fill opacity=0.15] (0.,1.) -- (2.,3.) -- (0.,5.) -- (-2.,3.) -- cycle;
\fill[line width=1.pt,color=zzttqq,fill=zzttqq,fill opacity=0.15] (3.,-2.) -- (0.,1.) -- (-3.,-2.) -- (0.,-5.) -- cycle;
\fill[line width=1.pt,color=zzttqq,fill=zzttqq,fill opacity=0.15] (4.,2.) -- (3.,-2.) -- (7.,-3.) -- (8.,1.) -- cycle;
\fill[line width=1.pt,color=zzttqq,fill=zzttqq,fill opacity=0.15] (2.,3.) -- (4.,2.) -- (5.,4.) -- (3.,5.) -- cycle;
\fill[line width=1.pt,color=blue,fill=blue,fill opacity=0.15] (2.,3.) -- (3.,-2.) -- (8.,-1.) -- (7.,4.) -- cycle;
\fill[line width=1.pt,color=blue,fill=blue,fill opacity=0.15] (4.,2.) -- (0.,1.) -- (1.,-3.) -- (5.,-2.) -- cycle;
\fill[line width=1.pt,color=qqzzqq,fill=qqzzqq,fill opacity=0.15] (2.5,0.5) -- (2.,1.5) -- (1.,1.) -- (1.5,0.) -- cycle;
\coordinate (A) at (2,3);
\coordinate (B) at (0,1);
\coordinate (C) at (3,-2);
\coordinate (D) at (4,2);
\draw [line width=1.pt] (A)-- (B);
\draw [line width=1.pt] (B)-- (C);
\draw [line width=1.pt] (C)-- (D);
\draw [line width=1.pt] (D)-- (A);
\draw [line width=1.pt] (B)-- (D);
\draw [line width=1.pt] (A)-- (C);
\draw [line width=1.pt,color=qqzzqq] (2.5,0.5)-- (2,1.5);
\draw [line width=1.pt,color=zzttqq] (2.,3.)-- (0.,5.);
\draw [line width=1.pt,color=zzttqq] (0.,5.)-- (-2.,3.);
\draw [line width=1.pt,color=zzttqq] (-2.,3.)-- (0.,1.);
\draw [line width=1.pt,color=zzttqq] (0.,1.)-- (-3.,-2.);
\draw [line width=1.pt,color=zzttqq] (-3.,-2.)-- (0.,-5.);
\draw [line width=1.pt,color=zzttqq] (0.,-5.)-- (3.,-2.);
\draw [line width=1.pt,color=zzttqq] (3.,-2.)-- (7.,-3.);
\draw [line width=1.pt,color=zzttqq] (7.,-3.)-- (8.,1.);
\draw [line width=1.pt,color=zzttqq] (8.,1.)-- (4.,2.);
\draw [line width=1.pt,color=zzttqq] (4.,2.)-- (5.,4.);
\draw [line width=1.pt,color=zzttqq] (5.,4.)-- (3.,5.);
\draw [line width=1.pt,color=zzttqq] (3.,5.)-- (2.,3.);
\draw [line width=1.pt,color=blue] (3.,-2.)-- (8.,-1.);
\draw [line width=1.pt,color=blue] (8.,-1.)-- (7.,4.);
\draw [line width=1.pt,color=blue] (7.,4.)-- (2.,3.);
\draw [line width=1.pt,color=blue] (0.,1.)-- (1.,-3.);
\draw [line width=1.pt,color=blue] (1.,-3.)-- (5.,-2.);
\draw [line width=1.pt,color=blue] (5.,-2.)-- (4.,2.);
\draw [line width=1.pt,color=qqzzqq] (2.,1.5)-- (1.,1.);
\draw [line width=1.pt,color=qqzzqq] (1.,1.)-- (1.5,0.);
\draw [line width=1.pt,color=qqzzqq] (1.5,0.)-- (2.5,0.5);
\begin{scriptsize}
\draw [fill=ududff] (2.,3.) circle (2.0pt);
\draw[color=black] (1.921644474714935,3.3826245015010383) node {$A$};
\draw [fill=ududff] (0.,1.) circle (2.0pt);
\draw[color=black] (-0.35,1.) node {$B$};
\draw [fill=ududff] (3.,-2.) circle (2.0pt);
\draw[color=black] (3.1,-2.3) node {$C$};
\draw [fill=ududff] (4.,2.) circle (2.0pt);
\draw[color=black] (4.3,2.1829561284596783) node {$D$};
\draw [fill=wewdxt] (2.5,0.5) circle (2.0pt);
\draw[color=black] (2.8,0.65) node {$L$};
\draw [fill=wewdxt] (2.,1.5) circle (2.0pt);
\draw[color=black] (2,1.8) node {$M$};
\draw [fill=wewdxt] (0.,5.) circle (2.0pt);
\draw [fill=wewdxt] (-2.,3.) circle (2.0pt);
\draw [fill=wewdxt] (-3.,-2.) circle (2.0pt);
\draw [fill=wewdxt] (0.,-5.) circle (2.0pt);
\draw [fill=wewdxt] (7.,-3.) circle (2.0pt);
\draw [fill=wewdxt] (8.,1.) circle (2.0pt);
\draw [fill=wewdxt] (5.,4.) circle (2.0pt);
\draw [fill=wewdxt] (3.,5.) circle (2.0pt);
\draw [fill=wewdxt] (8.,-1.) circle (2.0pt);
\draw [fill=wewdxt] (7.,4.) circle (2.0pt);
\draw [fill=wewdxt] (1.,-3.) circle (2.0pt);
\draw [fill=wewdxt] (5.,-2.) circle (2.0pt);
\draw [fill=wewdxt] (1.,1.) circle (2.0pt);
\draw [fill=wewdxt] (1.5,0.) circle (2.0pt);
\end{scriptsize}
\end{tikzpicture}
    \caption{Illustration of Euler's theorem in 
$\mathbb R^m$. The combined areas of the squares on the sides $\overline{AB}$, $\overline{BC}$, $\overline{CD}$, and $\overline{DA}$ equal the combined areas of the squares on the diagonals $\overline{AC}$, $\overline{BD}$ together with four times the area of the square on the \,segment $\overline{LM}$ joining the midpoints of the diagonals. The squares are drawn schematically; in $\mathbb R^m$, they need not lie in a common plane.}
    \label{fig:eulerinrm}
\end{figure}

These developments demonstrate that Euler's theorem is fundamentally an algebraic identity in an inner product space rather than merely a geometric statement about planar quadrilaterals.

An algebraic extension of the parallelogram identity to an even number of vectors was obtained by Douglas \cite{Douglas1981}. To the best of the author's knowledge, the first extensions of Euler's theorem beyond quadrilaterals were derived by Rassias \cite{Rassias2006}, who obtained identities for configurations of five and six vectors by repeated applications of Euler's theorem. More recently, Dlab and Williams \cite{Dlab-Williams2019} proposed further generalizations to larger configurations. However, these extensions were obtained separately for different values of $n$ and do not arise as special cases of a single algebraic identity valid for all $n\ge4$.

The purpose of the present paper is to establish such a unified identity. Our main result, Theorem \ref{thm:geneul}, provides an Euler-type identity for every finite collection of $n\ge4$ vectors in a real or complex inner product space. The proof is based on a combinatorial analysis of alternating vector sums and pairwise distances.

The paper is organized as follows. In Section \ref{sec:geneul} we prove the main result and provide its geometric interpretations. Section \ref{sec:cengenapoll} is devoted to generalized parallelogram and Apollonius-type identities together with several centroid formulas. The final section contains concluding remarks.

Throughout the paper, $\mathbb K\in\{\mathbb R,\mathbb C\}$ and $V$ denotes an inner product space over $\mathbb K$ with inner product $(\cdot,\cdot)$ and induced norm
\[
\|x\|=\sqrt{(x,x)}.
\]

\section{From the Parallelogram Identity to Euler's Theorem}
In this section we rewrite the parallelogram theorem, the Apollonius' theorem and the Euler's theorem in an inner product space.

\begin{theorem}[Parallelogram Identity]\label{thm:pariden}
    Let $x,y$ be vectors in an inner product space $V$. Then
    \begin{align}\label{eq:pariden}
        \Vert x+y \Vert^2+\Vert x-y \Vert^2&=2(\Vert x \Vert^2+\Vert y \Vert^2).
    \end{align}
\end{theorem}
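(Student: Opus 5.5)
The plan is to expand both squared norms on the left-hand side of \eqref{eq:pariden} directly from the definition $\|z\|^2=(z,z)$. We use only the bilinearity, or sesquilinearity when $\mathbb K=\mathbb C$, of the inner product. No earlier result of the paper is needed, since this is the base identity from which the later ones will be derived.

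First, using additivity in each argument, I would write
\[
\|x+y\|^2=(x+y,x+y)=(x,x)+(x,y)+(y,x)+(y,y).
\]
Second, for $\|x-y\|^2$ I would use $(x,-y)=-(x,y)$ and $(-y,x)=-(y,x)$. In the complex case these hold because the scalar $-1$ is real, so conjugation does not affect it. This gives
\[
\|x-y\|^2=(x,x)-(x,y)-(y,x)+(y,y).
\]

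Adding the two expansions, the cross terms $(x,y)+(y,x)$ cancel exactly. This leaves $2(x,x)+2(y,y)=2(\|x\|^2+\|y\|^2)$, which is \eqref{eq:pariden}.

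There is no real obstacle here. The only point requiring care is the complex case. There I will not combine $(x,y)+(y,x)$ into $2\operatorname{Re}(x,y)$ before cancelling; cancelling the unsimplified terms keeps the argument uniform for $\mathbb K\in\{\mathbb R,\mathbb C\}$.
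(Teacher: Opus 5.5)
Your proposal is correct. Expanding both squared norms by (sesqui)linearity and cancelling the unsimplified cross terms $(x,y)+(y,x)$ is the standard argument the paper has in mind when it leaves this identity as an exercise with a reference to Axler, and your remark that the real scalar $-1$ is unaffected by conjugation is exactly what the complex case needs.
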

The proof of Theorem \ref{thm:pariden} is an easy exercise in linear algebra (see, for example, \cite{Axler2024}). The parallelogram identity \eqref{eq:pariden} may be used to establish \emph{Apollonius' identity} \eqref{eq:apollonius}.
\begin{theorem}[Apollonius]\label{thm:apollonius}
    Let $x,y,z$ be vectors in an inner product space $V$. Then
    \begin{equation}\label{eq:apollonius}
        \Vert x-z \Vert^2+\Vert y-z \Vert^2=2\left\Vert z-\frac{x+y}{2} \right\Vert^2+\frac{1}{2}\Vert x-y \Vert^2.
    \end{equation}
\end{theorem}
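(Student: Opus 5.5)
The plan is to derive \eqref{eq:apollonius} directly from the parallelogram identity, Theorem \ref{thm:pariden}, by choosing the two vectors in \eqref{eq:pariden} so that their sum and difference produce the two distances on the left-hand side. Set
\[
u = x - z, \qquad v = y - z .
\]
Then $u - v = x - y$ and $u + v = x + y - 2z = -2\bigl(z - \tfrac{x+y}{2}\bigr)$.

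Next, apply \eqref{eq:pariden} to $u$ and $v$:
\[
\Vert u+v\Vert^2 + \Vert u-v\Vert^2 = 2\bigl(\Vert u\Vert^2 + \Vert v\Vert^2\bigr).
\]
Substituting, the left side becomes $4\bigl\Vert z-\tfrac{x+y}{2}\bigr\Vert^2 + \Vert x-y\Vert^2$. Here we use homogeneity of the norm, $\Vert -2w\Vert = 2\Vert w\Vert$, which holds over both $\mathbb R$ and $\mathbb C$ since $|-2|=2$. The right side is $2\bigl(\Vert x-z\Vert^2 + \Vert y-z\Vert^2\bigr)$. Dividing by $2$ gives exactly \eqref{eq:apollonius}.

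No step is a real obstacle. The only point needing care is choosing $u,v$ so that $u+v$ is a multiple of $z-\tfrac{x+y}{2}$ and $u-v$ is $x-y$. Once that choice is made, the result is a one-line rescaling.
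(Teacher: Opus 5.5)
Your proof is correct and is the same as the paper's: it also applies the parallelogram identity \eqref{eq:pariden} to $x-z$ and $y-z$ and divides by $2$. Your explicit note that $\Vert x+y-2z\Vert^2 = 4\bigl\Vert z-\tfrac{x+y}{2}\bigr\Vert^2$ states a rescaling step that the paper leaves implicit.
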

\begin{proof} Applying the parallelogram identity \eqref{eq:pariden} to the vectors $x-z$ and $y-z$, we obtain
\begin{equation}\label{eq:1.3}
        \Vert x-z+y-z \Vert^2+\Vert x-y \Vert^2=2(\Vert x-z \Vert^2+\Vert y-z \Vert^2).
    \end{equation}
Dividing \eqref{eq:1.3} by 2 yields \eqref{eq:apollonius}.
\end{proof}

Repeated applications of Apollonius' identity yield Euler's theorem in an inner product space. 
\begin{theorem}[Euler]\label{thm:euler}
    Let $x_1,x_2,x_3,x_4$ be four vectors in an inner product space $V$. Then
    \begin{equation}\label{eq:euler}
    \Vert x_1-x_2 \Vert^2+\Vert x_2-x_3 \Vert^2+\Vert x_3-x_4 \Vert^2+\Vert x_4-x_1 \Vert^2=\Vert x_1-x_3 \Vert^2+\Vert x_2-x_4 \Vert^2+4\left\Vert \frac{x_1+x_3}{2}-\frac{x_2+x_4}{2} \right\Vert^2.
\end{equation}
\end{theorem}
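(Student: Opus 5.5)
The plan is to apply Apollonius' identity (Theorem \ref{thm:apollonius}) three times, as the sentence before the statement suggests. Write $m=\frac{x_1+x_3}{2}$ and $n=\frac{x_2+x_4}{2}$ for the midpoints of the two ``diagonals''. The four side terms will be grouped in pairs that share an endpoint, with $x_2$ and $x_4$ playing the role of $z$.

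\emph{Step 1.} Apply \eqref{eq:apollonius} with $x=x_1$, $y=x_3$, $z=x_2$:
\[
\|x_1-x_2\|^2+\|x_3-x_2\|^2=2\|x_2-m\|^2+\tfrac12\|x_1-x_3\|^2 .
\]

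\emph{Step 2.} Apply it again with $z=x_4$:
\[
\|x_1-x_4\|^2+\|x_3-x_4\|^2=2\|x_4-m\|^2+\tfrac12\|x_1-x_3\|^2 .
\]
Adding these two equations gives the left side of \eqref{eq:euler}, with the norms read symmetrically (using $\|u\|=\|-u\|$, which holds over $\mathbb C$ as well). The result is
\[
\text{LHS}=\|x_1-x_3\|^2+2\bigl(\|x_2-m\|^2+\|x_4-m\|^2\bigr).
\]

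\emph{Step 3.} Apply Apollonius once more with $x=x_2$, $y=x_4$, $z=m$:
\[
\|x_2-m\|^2+\|x_4-m\|^2=2\|m-n\|^2+\tfrac12\|x_2-x_4\|^2 .
\]
Multiplying by $2$ and substituting yields
\[
\|x_1-x_3\|^2+\|x_2-x_4\|^2+4\|m-n\|^2,
\]
which is exactly the right side of \eqref{eq:euler}.

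There is no real obstacle here. The only point needing care is the bookkeeping in the third application: the midpoint $m$ must be treated as the vertex $z$, so that the cross term produces $\|m-n\|^2$ with the coefficient $2\cdot 2=4$. One should also check that $\|x_i-x_j\|=\|x_j-x_i\|$, so the sides can be matched in either orientation. As a fallback, one could expand everything via $\|u\|^2=(u,u)$ and compare real parts of the inner products, but the Apollonius route avoids this.
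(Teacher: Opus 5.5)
Your proof is correct and is essentially the paper's argument: the paper also applies Apollonius three times, but groups the sides around $x_1$ and $x_3$ using the midpoint $\bar{x}_{24}$, and then applies Apollonius to $x_1, x_3, \bar{x}_{24}$. You do the mirror-image version, using $m=\bar{x}_{13}$ with $x_2, x_4$ as the apex points, which is equivalent by symmetry.
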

\begin{proof} We denote the midpoint vector by
\begin{equation*}
    \bar{x}_{ij}:=\frac{1}{2}(x_i+x_j).    
\end{equation*}
Applying Apollonius' identity to the triples $x_1,x_2,x_4$ and $x_2,x_3,x_4$ yield
\begin{align}
    \Vert x_1-x_2\Vert^2+\Vert x_1-x_4\Vert^2&=2\Vert x_1-\bar{x}_{24}\Vert^2+\frac{1}{2}\Vert x_2-x_4\Vert^2 \label{eq:1.5}\\
    \Vert x_2-x_3\Vert^2+\Vert x_3-x_4\Vert^2&=2\Vert x_3-\bar{x}_{24}\Vert^2+\frac{1}{2}\Vert x_2-x_4\Vert^2.\label{eq:1.6}
\end{align}
Adding equations \eqref{eq:1.5} and \eqref{eq:1.6}, we obtain
\begin{equation}\label{eq:1.7}
    \Vert x_1-x_2 \Vert^2+\Vert x_2-x_3 \Vert^2+\Vert x_3-x_4 \Vert^2+\Vert x_1-x_4 \Vert^2=2\Vert x_1-\bar{x}_{24}\Vert^2+2\Vert x_3-\bar{x}_{24}\Vert^2+\Vert x_2-x_4 \Vert^2.
\end{equation}
Now applying Apollonius' identity to the vectors $x_1,x_3,\bar{x}_{24}$, we obtain
\begin{align}\label{eq:1.8}
    \Vert x_1-\bar{x}_{24}\Vert^2+\Vert x_3-\bar{x}_{24}\Vert^2&=2\Vert \bar{x}_{13}-\bar{x}_{24}\Vert^2+\frac{1}{2}\Vert x_1-x_3\Vert^2.
\end{align}
Using \eqref{eq:1.8} into \eqref{eq:1.7} one obtains the desired result.
\end{proof}

Conversely, Apollonius' identity is recovered from Euler's identity \eqref{eq:euler} by setting $x_2=x_4$.

\begin{remark}
Equation \eqref{eq:euler} may be rewritten in the equivalent form
\[
\Vert x_1-x_2 \Vert^2+\Vert x_2-x_3 \Vert^2+\Vert x_3-x_4 \Vert^2+\Vert x_4-x_1 \Vert^2
=
\Vert x_1-x_3 \Vert^2+\Vert x_2-x_4 \Vert^2
+\left\Vert x_1-x_2+x_3-x_4 \right\Vert^2.
\]
This formulation is particularly convenient for generalization. Indeed, it serves as the prototype for the Euler-type identities established in the next section for arbitrary collections of $n\ge 4$ vectors in an inner product space.
\end{remark}

\section{Generalized Euler Identity}\label{sec:geneul}

\subsection{Main Result}\label{sec:geneulmid}

We begin with a combinatorial lemma that will be used to establish the main theorem of the paper. 
\begin{lemma}\label{lemm:coefxp}
Let $n,k\in \mathbb{N}$ with $4\leq k\leq n$, and let $i_j \in \{1,2,...,n\}$ for $j\in \{1,2,...,k\}$. 
\begin{itemize}
    \item[(a)] The coefficient of $\Vert x_p \Vert^2$, $p\in \{i_1,...,i_k\}$, in the expansion of
\[
\sum_{1\leq i_1<...<i_k\leq n}\Vert x_{i_1}-x_{i_2}+\ldots (-1)^{k-1} x_{i_k} \Vert^2
\]
is
\[
\binom{n-1}{k-1}.
\]
    \item[(b)] The coefficient of $\mathrm{Re}(x_p,x_q)$ where $p,q\in \{i_1,...,i_k\}$ with $p<q$, in the expansion of
\[
\sum_{1\leq i_1<...<i_k\leq n}\Vert x_{i_1}-x_{i_2}+\ldots (-1)^{k-1} x_{i_k} \Vert^2
\]
is given by $2c_{pq}$ where
\begin{equation}\label{eq:cpq}
    c_{pq}=\sum_{r=0}^{k-2} (-1)^{r+1}\binom{n-q+p-1}{k-2-r}\binom{q-p-1}{r}.
\end{equation}
In particular, $c_{pq}=c_{1,q-p+1}$. And for $q=p+1$,
\begin{equation*}
    c_{pq}=-\binom{n-2}{k-2}.
\end{equation*} 
\end{itemize}
\end{lemma}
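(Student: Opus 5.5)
We expand each summand with the inner-product rule
\[
\Big\Vert \sum_{j=1}^k \epsilon_j x_{i_j}\Big\Vert^2=\sum_{j=1}^k\Vert x_{i_j}\Vert^2+2\sum_{1\le j<l\le k}\epsilon_j\epsilon_l\,\mathrm{Re}(x_{i_j},x_{i_l}),
\]
where $\epsilon_j=(-1)^{j-1}$ and $|\epsilon_j|=1$. The sign in front of $\mathrm{Re}(x_{i_j},x_{i_l})$ is $\epsilon_j\epsilon_l=(-1)^{l-j}$. It depends only on the parity of the number of chosen indices strictly between $i_j$ and $i_l$ (that number is $l-j-1$). Both parts then reduce to counting $k$-subsets.

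For (a), every subset containing $p$ contributes exactly $1$ to the coefficient of $\Vert x_p\Vert^2$, whatever position $p$ occupies. The number of $k$-subsets of $\{1,\dots,n\}$ containing $p$ is $\binom{n-1}{k-1}$, which gives (a).

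For (b), fix $p<q$ and consider all $k$-subsets containing both. Such a subset is determined by how many of its remaining $k-2$ elements lie strictly between $p$ and $q$; call this number $r$. There are $q-p-1$ integers strictly between $p$ and $q$ and $n-(q-p+1)=n-q+p-1$ integers outside $[p,q]$. So the number of subsets with a given $r$ is $\binom{q-p-1}{r}\binom{n-q+p-1}{k-2-r}$. The sign is $(-1)^{r+1}$, because $p$ and $q$ sit in positions differing by $r+1$. Summing over $r$ from $0$ to $k-2$ gives \eqref{eq:cpq}, with the usual convention that binomials vanish outside range. The expansion contributes $2\epsilon_j\epsilon_l$ per subset, so the total coefficient is $2c_{pq}$.

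The formula depends on $p,q$ only through $q-p$, since $n-q+p-1=n-(q-p)-1$. Substituting $p'=1$ and $q'=q-p+1$ leaves both binomial arguments unchanged, so $c_{pq}=c_{1,q-p+1}$. When $q=p+1$ we have $q-p-1=0$, so only the $r=0$ term survives. It equals $-\binom{n-2}{k-2}$.

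The only delicate point is the sign bookkeeping. The key observation is that the positions of $p$ and $q$ within the ordered subset can shift, but their difference equals one plus the number of chosen elements strictly between them. Once that is stated, everything else is routine counting, so I expect no real obstacle.
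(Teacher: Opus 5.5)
Your proof is correct and rests on the same idea as the paper's: sort the $k$-subsets containing $p$ and $q$ by the number $r$ of chosen indices strictly between them, read off the sign $(-1)^{r+1}$, and count. The paper additionally splits each case by the position of $p$ in the subset and recombines the pieces with Vandermonde's identity, working out $q-p=1,2$ explicitly and then extrapolating. You instead count the $k-2-r$ indices outside $[p,q]$ in a single binomial $\binom{n-q+p-1}{k-2-r}$, and the subsets containing $p$ directly as $\binom{n-1}{k-1}$, which makes your argument shorter and uniform in $q-p$.
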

\begin{proof} 
To prove (a), suppose that $i_j=p$ for some $j\in\{1,2,\ldots,k\}$. Then there are
\[
\binom{n-p}{k-j}
\]
choices for $x_{i_{j+1}},\ldots,x_{i_k}$ for each of the
\[
\binom{p-1}{j-1}
\]
choices of $x_{i_1},\ldots,x_{i_{j-1}}$. Therefore, the coefficient of $\|x_p\|^2$ is
\[
\sum_{j=1}^{k}\binom{n-p}{k-j}\binom{p-1}{j-1}
=
\binom{n-1}{k-1},
\]
where the last equality follows from Vandermonde's identity
\[
\binom{m+n}{k}
=
\sum_{r=0}^{k}
\binom{m}{k-r}\binom{n}{r}.
\]

To prove (b), we proceed similarly. First consider the case $q-p=1$. There are $k-1$ subcases.

\begin{itemize}
\item
\[
\|x_p-x_q+x_{i_3}-x_{i_4}+\cdots+(-1)^{k-1}x_{i_k}\|^2,
\]
with
\[
1\le p<q<i_3<\cdots<i_k\le n.
\]
In this case there are $\binom{n-q}{k-2}$ choices for
$x_{i_3},\ldots,x_{i_k}$. Hence the contribution to the coefficient of
$\mathrm{Re}(x_p,x_q)$ is
\[
-2\binom{n-q}{k-2}.
\]

\item
\[
\|x_{i_1}-x_p+x_q-x_{i_4}+\cdots+(-1)^{k-1}x_{i_k}\|^2,
\]
with
\[
1\le i_1<p<q<i_4<\cdots<i_k\le n.
\]
In this case there are $\binom{n-q}{k-3}$ choices for
$x_{i_4},\ldots,x_{i_k}$ for each of the
$\binom{p-1}{1}$ choices of $x_{i_1}$. Thus the contribution to the coefficient of $\mathrm{Re}(x_p,x_q)$ is
\[
-2\binom{n-q}{k-3}\binom{p-1}{1}.
\]
\end{itemize}

Summing over all $k-1$ subcases gives
\[
-2\sum_{r=0}^{k-2}
\binom{n-q}{k-2-r}\binom{p-1}{r}
=
-2\binom{n-q+p-1}{k-2}.
\]

Next consider the case $q-p=2$. There are now $k-2$ subcases.

\begin{itemize}
\item
\[
\|x_p-x_{i_2}+x_q+x_{i_3}-\cdots+(-1)^{k-1}x_{i_k}\|^2,
\]
with
\[
1\le p<i_2<q<i_3<\cdots<i_k\le n.
\]
There are $\binom{n-q}{k-3}$ choices for
$x_{i_3},\ldots,x_{i_k}$ for each of the
$\binom{q-p-1}{1}$ choices of $x_{i_2}$. Hence the contribution to the coefficient of $\mathrm{Re}(x_p,x_q)$ is
\[
2\binom{n-q}{k-3}\binom{q-p-1}{1}.
\]

\item
\[
\|x_{i_1}-x_p+x_{i_3}-x_q+\cdots\|^2,
\]
with
\[
1\le i_1<p<i_3<q<i_4<\cdots<i_k\le n.
\]
There are $\binom{n-q}{k-4}$ choices for
$x_{i_4},\ldots,x_{i_k}$ for each of the
$\binom{q-p-1}{1}$ choices of $x_{i_3}$ and each of the
$\binom{p-1}{1}$ choices of $x_{i_1}$. Thus the contribution to the coefficient of $\mathrm{Re}(x_p,x_q)$ is
\[
2\binom{n-q}{k-4}
\binom{q-p-1}{1}
\binom{p-1}{1}.
\]
\end{itemize}

Summing over all $k-2$ subcases yields
\[
2\sum_{r=0}^{k-3}
\binom{n-q}{k-3-r}
\binom{q-p-1}{1}
\binom{p-1}{r}
=
2\binom{n-q+p-1}{k-3}
\binom{q-p-1}{1}.
\]

Proceeding in this manner, we obtain the coefficient of
$\mathrm{Re}(x_p,x_q)$ as
\[
2\sum_{r=0}^{k-2}
(-1)^{r+1}
\binom{n-q+p-1}{k-2-r}
\binom{q-p-1}{r}
=
2c_{pq}.
\]

This completes the proof.
\end{proof}

\begin{corollary}\label{cor:sumcij} Let $k=4$ and $c_{ij}$ be defined as in \eqref{eq:cpq}. Then
    \begin{equation}\label{eq:sumcij}
        \sum_{j=i+2}^{n+i-2}c_{ij}+\binom{n-1}{3}=2\binom{n-2}{2}.
    \end{equation}
\end{corollary}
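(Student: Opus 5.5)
The plan is to reduce \eqref{eq:sumcij} to a finite sum of elementary binomial expressions in one variable and evaluate it with the hockey-stick identity and Pascal's rule. Three ingredients come from Lemma \ref{lemm:coefxp}(b): the translation invariance $c_{pq}=c_{1,q-p+1}$, the fact that $c_{pq}$ depends only on the gap $d=q-p$, and the explicit formula \eqref{eq:cpq} specialised to $k=4$. The summation index $j$ in \eqref{eq:sumcij} runs up to $n+i-2$, which may exceed $n$. So I will read $c_{ij}$ as the value of \eqref{eq:cpq} at gap $d=j-i$, as the invariance statement allows. The sum then becomes $\sum_{d=2}^{n-2} c(d)$, where $c(d)$ denotes the common value of $c_{pq}$ for $q-p=d$.

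For $k=4$, formula \eqref{eq:cpq} has three terms, $r=0,1,2$. Putting $N=n-2$ and $m=d-1$, so that $m$ runs from $1$ to $N-1$, these give
\[
c(d)=-\binom{N-m}{2}+(N-m)\,m-\binom{m}{2}.
\]
I will sum each of the three pieces over $1\le m\le N-1$ separately:
\begin{itemize}
\item Under the symmetry $m\mapsto N-m$, the sums of $\binom{N-m}{2}$ and of $\binom{m}{2}$ are equal. By the hockey-stick identity each equals $\binom{N}{3}$.
\item The middle sum $\sum_{m=1}^{N-1} m(N-m)$ is the standard convolution $\frac{(N-1)N(N+1)}{6}$, that is $\binom{N+1}{3}$. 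It can also be seen as Vandermonde's identity with lower binomial coefficients $\binom{m}{1}\binom{N-m}{1}$.
\end{itemize}
Hence
\[
\sum_{d=2}^{n-2}c(d)=\binom{n-1}{3}-2\binom{n-2}{3}.
\]

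Adding $\binom{n-1}{3}$ gives $2\bigl(\binom{n-1}{3}-\binom{n-2}{3}\bigr)$, which equals $2\binom{n-2}{2}$ by Pascal's rule. This is \eqref{eq:sumcij}.

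The only delicate point I expect is the bookkeeping. First, the interpretation of $c_{ij}$ for $j>n$ must be justified through the invariance in Lemma \ref{lemm:coefxp}(b). Second, the range $2\le d\le n-2$ must translate exactly to $1\le m\le N-1$, so that the hockey-stick sums start and stop at the right places. Boundary cases where $N-m<2$ or $m<2$ cause no trouble, because those binomial coefficients simply vanish. As a sanity check I will verify the case $n=4$. There the sum has the single term $c(2)=-0+1-0=1$, and $1+\binom{3}{3}=2=2\binom{2}{2}$.
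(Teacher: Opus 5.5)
Your proof is correct and follows the same route as the paper. Translation invariance reduces the sum to $\sum_{j=3}^{n-1}c_{1j}=\sum_{d=2}^{n-2}c(d)$, which is then evaluated from \eqref{eq:cpq} with $k=4$. The only difference is the final evaluation: the paper expands $c_{1j}$ as the quadratic $-2j^2+4j+2jn-\frac12n^2-\frac32n-3$ and leaves the summation over $j$ implicit, whereas you sum the three binomial pieces in closed form via the hockey-stick identity and Pascal's rule, which makes the computation fully explicit. Your symmetry $m\mapsto N-m$, i.e.\ $c(d)=c(n-d)$, is also what reconciles the cyclic reading of $j>n$ with the pairs $j<i$ actually needed in Theorem~\ref{thm:geneul}.
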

\begin{proof} Using (b) of Lemma \ref{lemm:coefxp} with $k=4$ we find
    \begin{equation*}
            \sum_{j=i+2}^{n+i-2}c_{ij}=\sum_{j=3}^{n-1}c_{1j}
            =-2 j^2+4j + 2j n - \frac{1}{2} n^2- \frac{3}{2} n - 3.
    \end{equation*}
 Summing over $j$ gives \eqref{eq:sumcij}.   
\end{proof}

We are now ready to prove our main result.
\begin{theorem}[Generalized Euler Identity]\label{thm:geneul}
Let $n\geq 4$, and let $x_1,x_2,\ldots,x_n$ be vectors in an inner product space $V$. Let $c_{ij}$ be defined by \eqref{eq:cpq}. Then
\begin{equation}\label{eq:geneul}
\binom{n-2}{2}\sum_{i=1}^{n}\|x_i-x_{i+1}\|^2
=
\sum_{\substack{1\le i<j\le n\\1<j-i<n-1}}
c_{ij}\|x_i-x_j\|^2
+
\sum_{1\le i<j<k<l\le n}
\|x_i-x_j+x_k-x_l\|^2,
\end{equation}
where $x_{n+1}=x_1$.
\end{theorem}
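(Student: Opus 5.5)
The plan is to prove \eqref{eq:geneul} by comparing coefficients. Both sides are Hermitian quadratic forms in $x_1,\dots,x_n$. Expanding every squared norm shows that each side is a linear combination of the terms $\|x_p\|^2$ and $\mathrm{Re}(x_p,x_q)$ for $p<q$. It therefore suffices to show that each such monomial has the same coefficient on both sides. A useful first observation is that every term on both sides is invariant under adding a common vector to all the $x_i$: the differences $x_i-x_j$ are, and so is the alternating sum $x_i-x_j+x_k-x_l$, whose coefficients sum to zero. So each side has the form $\sum_{p<q} a_{pq}\|x_p-x_q\|^2$. I will still match the diagonal terms directly, as a consistency check.

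\emph{Diagonal terms.} On the left, each $x_p$ appears in exactly two cyclic edges, so $\|x_p\|^2$ has coefficient $2\binom{n-2}{2}$. On the right, Lemma \ref{lemm:coefxp}(a) with $k=4$ gives coefficient $\binom{n-1}{3}$ from the four-term sum. The middle sum contributes $\sum_j c_{pj}$, taken over all $j$ whose cyclic distance from $p$ lies in $\{2,\dots,n-2\}$. Here I use the symmetry $c_{pq}=c_{1,q-p+1}$ from the lemma, together with the reflection $j\mapsto n+2-j$ (reversal $i\mapsto n+1-i$), to treat the index as a cyclic distance. The required equality is then exactly Corollary \ref{cor:sumcij}: $\sum_{j=i+2}^{n+i-2}c_{ij}+\binom{n-1}{3}=2\binom{n-2}{2}$.

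\emph{Off-diagonal terms.} Fix $p<q$ and split into cases.
\begin{itemize}
\item If $q=p+1$ or $(p,q)=(1,n)$, so that the pair is a cyclic edge, the left side contributes $-2\binom{n-2}{2}$. The middle sum contributes nothing. The four-term sum contributes $2c_{pq}=-2\binom{n-2}{2}$ by Lemma \ref{lemm:coefxp}(b) when $q=p+1$. For $(1,n)$ I would check directly that $c_{1n}=-\binom{n-2}{2}$: here $q-p-1=n-2$ and $n-q+p-1=0$, so only $r=2$ survives, giving $(-1)^3\binom{n-2}{2}$.
\item If the pair is a non-edge ($1<q-p<n-1$), the left side contributes $0$. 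The middle term contributes $-2c_{pq}$, since $\|x_p-x_q\|^2$ contains $-2\mathrm{Re}(x_p,x_q)$. The four-term sum contributes $+2c_{pq}$. These cancel.
\end{itemize}
So all coefficients agree, and the identity follows.

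\emph{Main obstacle.} The hard step is the diagonal bookkeeping. One has to verify carefully that the set of $j$ attached to $p$ in the middle sum (pairs with $1<|j-p|<n-1$) corresponds exactly to the range summed in Corollary \ref{cor:sumcij} after cyclic reindexing. That reindexing needs $c_{pq}$ to depend only on the cyclic distance, i.e.\ $c_{1,m}=c_{1,n+2-m}$. I would check this directly from \eqref{eq:cpq} for $k=4$, where $c_{1m}$ is an explicit quadratic in $m$ symmetric about $m=(n+2)/2$.

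If the reindexing proves awkward, the fallback is the translation-invariance observation above. Writing both sides as $\sum a_{pq}\|x_p-x_q\|^2$ forces the diagonal coefficients to be determined by the off-diagonal ones, so the off-diagonal matching alone suffices.
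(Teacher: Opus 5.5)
Your proposal is correct and takes essentially the paper's route. You match the $\mathrm{Re}(x_p,x_q)$ coefficients via Lemma~\ref{lemm:coefxp}(b), treat the cyclic edges via $c_{p,p+1}=-\binom{n-2}{2}$, and match the diagonal terms via Lemma~\ref{lemm:coefxp}(a) and Corollary~\ref{cor:sumcij}; your explicit checks of $c_{1n}=-\binom{n-2}{2}$ and of the reflection symmetry $c_{1,m}=c_{1,n+2-m}$ (needed for the cyclic reindexing in Corollary~\ref{cor:sumcij}) fill in steps the paper leaves implicit, and your translation-invariance fallback is also valid.
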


\begin{proof}
We prove the result for a complex inner product space; the real case is identical.

Let $1\le i<j\le n$. By Lemma \ref{lemm:coefxp}, the coefficient of
$\mathrm{Re}(x_i,x_j)$ in
\[
\sum_{1\le i<j<k<l\le n}
\|x_i-x_j+x_k-x_l\|^2
\]
is $2c_{ij}$, whereas its coefficient in
\[
\sum_{\substack{1\le i<j\le n\\1<j-i<n-1}}
c_{ij}\|x_i-x_j\|^2
\]
is $-2c_{ij}$.

Hence these terms cancel whenever $1<j-i<n-1$. The only remaining mixed terms occur when $j=i+1$ (cyclically), in which case
\[
c_{i,i+1}
=
-\binom{n-2}{2},
\]
so the coefficient of $\mathrm{Re}(x_i,x_{i+1})$ is
\[
-2c_{i,i+1}
=
2\binom{n-2}{2}.
\]

Therefore,
\begin{align*}
&\sum_{1\le i<j<k<l\le n}
\|x_i-x_j+x_k-x_l\|^2
+
\sum_{\substack{1\le i<j\le n\\1<j-i<n-1}}
c_{ij}\|x_i-x_j\|^2 \\
&=
\sum_{i=1}^{n}
\left(
\sum_{j=i+2}^{n+i-2} c_{ij}
+
\binom{n-1}{3}
\right)\|x_i\|^2
-
2\sum_{i=1}^{n}
c_{i,i+1}\,\mathrm{Re}(x_i,x_{i+1}) \\
&=
2\binom{n-2}{2}\sum_{i=1}^{n}\|x_i\|^2
-
2\binom{n-2}{2}
\sum_{i=1}^{n}\mathrm{Re}(x_i,x_{i+1}) \\
&=
\binom{n-2}{2}
\sum_{i=1}^{n}\|x_i-x_{i+1}\|^2,
\end{align*}
where we used Corollary \ref{cor:sumcij} and the convention
$x_{n+1}=x_1$.
\end{proof}

\begin{corollary}\label{cor:geneulRm}
Let $A_1,A_2,\ldots,A_n$ be (not necessarily distinct) points in
$\mathbb{R}^m$, and let $M_{ij}$ denote the midpoint of the segment
$\overline{A_iA_j}$. If $c_{ij}$ is defined by \eqref{eq:cpq}, then
\begin{equation}\label{eq:geneulRm}
\binom{n-2}{2}\sum_{i=1}^{n}|A_iA_{i+1}|^2
=
\sum_{\substack{1\le i<j\le n\\1<j-i<n-1}}
c_{ij}|A_iA_j|^2
+
\sum_{1\le i<j<k<l\le n}
4|M_{ik}M_{jl}|^2,
\end{equation}
where $A_{n+1}=A_1$.

In particular, when $n=4$, \eqref{eq:geneulRm} reduces to Euler's quadrilateral theorem.
\end{corollary}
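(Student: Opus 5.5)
The plan is to obtain Corollary \ref{cor:geneulRm} as a direct specialization of Theorem \ref{thm:geneul} to $V=\mathbb{R}^m$ with the standard Euclidean inner product. We identify each point $A_i$ with its position vector $x_i\in\mathbb{R}^m$, so that $|A_iA_j|=\|x_i-x_j\|$ and $M_{ij}$ has position vector $\bar x_{ij}=\tfrac12(x_i+x_j)$. The points are not assumed distinct, and Theorem \ref{thm:geneul} places no restriction on the vectors, so coincidences cause no trouble.

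Under this identification, the left-hand side of \eqref{eq:geneul} becomes the left-hand side of \eqref{eq:geneulRm}, with the same cyclic convention $A_{n+1}=A_1$. The first sum on the right also transfers verbatim, with the same coefficients $c_{ij}$ from \eqref{eq:cpq}. For the remaining sum, fix $i<j<k<l$. We rewrite the alternating sum as
\[
x_i-x_j+x_k-x_l=(x_i+x_k)-(x_j+x_l)=2\bigl(\bar x_{ik}-\bar x_{jl}\bigr).
\]
Therefore $\|x_i-x_j+x_k-x_l\|^2=4\|\bar x_{ik}-\bar x_{jl}\|^2=4|M_{ik}M_{jl}|^2$. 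Substituting these three identifications term by term into \eqref{eq:geneul} yields \eqref{eq:geneulRm}.

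For the case $n=4$, we compute the coefficients.
\begin{itemize}
\item The left-hand factor is $\binom{2}{2}=1$.
\item The first sum on the right runs over pairs with $1<j-i<3$, that is, $j-i=2$. These are the pairs $(1,3)$ and $(2,4)$, the two diagonals.
\item Lemma \ref{lemm:coefxp}(b) gives $c_{pq}=c_{1,q-p+1}$, so both coefficients equal $c_{13}$.
\item With $n=k=4$, $p=1$, $q=3$, formula \eqref{eq:cpq} becomes $c_{13}=\sum_{r=0}^{2}(-1)^{r+1}\binom{1}{2-r}\binom{1}{r}$. Only $r=1$ survives, giving $c_{13}=1$.
\item The quadruple sum consists of the single term $4|M_{13}M_{24}|^2$.
\end{itemize}
The identity therefore reads
\[
|A_1A_2|^2+|A_2A_3|^2+|A_3A_4|^2+|A_4A_1|^2=|A_1A_3|^2+|A_2A_4|^2+4|M_{13}M_{24}|^2,
\]
which is Euler's theorem (Theorem \ref{thm:classiceuler}) with $L=M_{13}$ and $M=M_{24}$.

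No step here is a genuine obstacle. The only points needing care are the sign bookkeeping in evaluating $c_{13}$, where vanishing binomials such as $\binom{1}{2}=0$ must be dropped, and checking that the index condition $1<j-i<n-1$ selects exactly the diagonals when $n=4$.
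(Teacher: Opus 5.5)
Your proposal is correct and takes the same approach as the paper. Both specialize Theorem~\ref{thm:geneul} to position vectors in $\mathbb{R}^m$, rewrite $x_i-x_j+x_k-x_l=2(\bar x_{ik}-\bar x_{jl})$ to turn each quadruple term into $4|M_{ik}M_{jl}|^2$, and for $n=4$ observe that only the diagonal pairs $(1,3),(2,4)$ and the single quadruple $(1,2,3,4)$ remain, with $c_{13}=c_{24}=1$. Your explicit evaluation of $c_{13}$ from \eqref{eq:cpq}, where only the $r=1$ term survives, simply writes out the ``direct computation'' that the paper leaves implicit.
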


\begin{proof}
Let $x_i$ be the position vector of $A_i$. Since
\[
x_i-x_j+x_k-x_l
=
2(m_{ik}-m_{jl}),
\]
where $m_{ik}$ and $m_{jl}$ denote the position vectors of the midpoints
$M_{ik}$ and $M_{jl}$, respectively, it follows that
\[
\|x_i-x_j+x_k-x_l\|^2
=
4|M_{ik}M_{jl}|^2.
\]
Substituting this relation into \eqref{eq:geneul} yields
\eqref{eq:geneulRm}.

For $n=4$, the only admissible pairs of indices are $(i,k)=(1,3)$ and $(j,l)=(2,4)$, and a direct computation from \eqref{eq:cpq} gives
\[
c_{13}=c_{24}=1.
\]
Moreover, there is only one quadruple of indices $(i,j,k,l)=(1,2,3,4)$. Hence \eqref{eq:geneulRm} becomes
\begin{equation}\label{eq:eulerred}
    |A_1A_2|^2+|A_2A_3|^2+|A_3A_4|^2+|A_4A_1|^2
=
|A_1A_3|^2+|A_2A_4|^2
+
4|M_{13}M_{24}|^2,
\end{equation}
which is precisely Euler's quadrilateral theorem.
\end{proof}

\subsection{Example and Comparison with Earlier Generalizations}

In this subsection we illustrate Theorem \ref{thm:geneul} for $n=6$ vectors in $\mathbb{R}^m$ and compare it with previously known extensions of Euler's theorem.




\begin{example} For $n=6$, Theorem \ref{thm:geneul} yields
\begin{align*}
6\bigl(|A_1A_2|^2+|A_2A_3|^2+\cdots+|A_6A_1|^2\bigr)
&=2(|A_1A_4|^2+|A_2A_5|^2+|A_3A_6|^2) \\
&\quad+4\bigl(|M_{13}M_{24}|^2+|M_{13}M_{25}|^2+|M_{13}M_{26}|^2\\
&\quad+|M_{14}M_{24}|^2+|M_{14}M_{25}|^2+|M_{14}M_{26}|^2+|M_{14}M_{35}|^2\\
&\quad+|M_{14}M_{36}|^2+|M_{15}M_{26}|^2+|M_{15}M_{36}|^2+|M_{15}M_{46}|^2 \\
&\quad+|M_{24}M_{35}|^2+|M_{24}M_{36}|^2+|M_{25}M_{36}|^2 \\
&\quad+|M_{25}M_{46}|^2+|M_{35}M_{46}|^2\bigr).
\end{align*}

For comparison, Dlab and Williams \cite{Dlab-Williams2019} obtained
\begin{align*}
3\bigl(|A_1A_2|^2+|A_2A_3|^2+\cdots+|A_6A_1|^2\bigr)
&=2\bigl(|A_1A_3|^2+|A_2A_4|^2+|A_3A_5|^2+|A_4A_6|^2
+|A_5A_1|^2\\
&\quad+|A_6A_2|^2\bigr)-\bigl(|A_1A_4|^2+|A_2A_5|^2+|A_3A_6|^2
+|A_4A_1|^2\\
&\quad+|A_5A_2|^2+|A_6A_3|^2\bigr)+4\bigl(|B_1B_2|^2+|B_2B_3|^2\\
&\quad+|B_3B_4|^2+|B_4B_5|^2+|B_5B_6|^2+|B_6B_1|^2\bigr),
\end{align*}
where $B_1$ is the midpoint of $A_6A_2$ and, for $i=2,\ldots,6$, $B_i$ is the midpoint of $A_{i-1}A_{i+1}$.

\medskip

The identity of Dlab and Williams and the specialization of Theorem \ref{thm:geneul} for $n=6$ are both Euler-type extensions of Euler's quadrilateral theorem, but they differ substantially in form and scope. The Dlab--Williams identity expresses the sum of the squared side lengths in terms of the lengths of the short and long diagonals together with distances between consecutive midpoint vertices $B_1,\ldots,B_6$. In contrast, Theorem \ref{thm:geneul} involves only the long diagonals $A_1A_4$, $A_2A_5$, and $A_3A_6$, while the correction term is given by a sum of squared distances between pairs of midpoints $M_{ij}$. Moreover, Theorem \ref{thm:geneul} is an algebraic identity valid in every real or complex inner product space and holds uniformly for all $n\ge4$. By comparison, the theorem of Dlab and Williams is formulated for $n$-gons with $n\ge6$ and is derived recursively from their result for $n=5$, which itself is obtained through repeated applications of Euler's quadrilateral theorem. Thus, while both results generalize Euler's theorem, Theorem \ref{thm:geneul} provides a single algebraic framework encompassing all cases $n\ge4$.
\end{example}

\section{Generalized Parallelogram and Apollonius' Identity}\label{sec:cengenapoll}
\noindent
The generalized Euler identity established in the previous section naturally leads to related \,extensions of two classical metric identities: the parallelogram identity and Apollonius' theorem. In this \,section, we derive a generalized parallelogram identity for an even number of vectors and show its geometric interpretation in terms of centroids of alternating point configurations. We then \,establish a generalized Apollonius identity, obtained through a centroid version of the Huygens--Steiner \,formula, and discuss several geometric consequences for finite point sets in Euclidean space.

\subsection{A Generalized Parallelogram Identity}
\label{sec:geneulcentroid}

\begin{theorem}\label{thm:geneulG}
Let $n\geq 4$ be an even integer, and let
$x_1,x_2,\ldots,x_n$ be vectors in an inner product space $V$. Then
\begin{equation}\label{eq:geneulx}
\sum_{i=1}^{n}\|x_i-x_{i+1}\|^2
=
\sum_{\substack{1\le i<j\le n\\ 1<j-i<n-1}}
(-1)^{j-i}\|x_i-x_j\|^2
+
\left\|
\sum_{i=1}^{n}(-1)^{i-1}x_i
\right\|^2,
\end{equation}
where $x_{n+1}=x_1$.
\end{theorem}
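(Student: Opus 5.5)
The plan is to expand both sides of \eqref{eq:geneulx} into the basis quantities $\|x_p\|^2$ and $\mathrm{Re}(x_p,x_q)$, $p<q$, and compare coefficients. This follows the proof of Theorem \ref{thm:geneul}, but here there is only one alternating sum, of all $n$ vectors, so no combinatorial lemma is needed. The key observation is that the sign attached to $x_i$ in $\sum_{i}(-1)^{i-1}x_i$ is $(-1)^{i-1}$. Hence
\[
\Bigl\|\sum_{i=1}^{n}(-1)^{i-1}x_i\Bigr\|^2=\sum_{i=1}^n\|x_i\|^2+2\sum_{1\le i<j\le n}(-1)^{j-i}\,\mathrm{Re}(x_i,x_j).
\]

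Next I expand the middle sum. Each term contributes $(-1)^{j-i}\bigl(\|x_i\|^2+\|x_j\|^2-2\mathrm{Re}(x_i,x_j)\bigr)$. The mixed terms $-2(-1)^{j-i}\mathrm{Re}(x_i,x_j)$ then cancel exactly the corresponding mixed terms of the alternating square for every non-adjacent pair, that is, for $1<j-i<n-1$.

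Two kinds of mixed terms survive from the alternating square.
\begin{itemize}
\item The consecutive pairs $j=i+1$ give $-2\mathrm{Re}(x_i,x_{i+1})$.
\item The pair $(1,n)$ gives $2(-1)^{n-1}\mathrm{Re}(x_1,x_n)=-2\mathrm{Re}(x_1,x_n)$. This is the one place where evenness of $n$ is used: it makes the closing edge $x_nx_1$ behave like the other cyclic neighbours.
\end{itemize}
Together these are exactly $-2\sum_{i=1}^n\mathrm{Re}(x_i,x_{i+1})$ with $x_{n+1}=x_1$, which is the mixed part of the left-hand side.

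It remains to match the $\|x_p\|^2$ coefficients. The left side has coefficient $2$, and the alternating square contributes $1$. So I must show that $\sum(-1)^{j-i}$, taken over the partners $j$ of $p$ at cyclic distance $d$ with $2\le d\le n-2$, equals $1$.

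By the cyclic symmetry of the index set $\{1<|j-i|<n-1\}$, these partners are $p+d \pmod n$ for $d=2,\dots,n-2$. The sign of each is $(-1)^d$, because $n$ is even, so reducing mod $n$ does not change parity. The sum is therefore $\sum_{d=2}^{n-2}(-1)^d$. This has $n-3$ terms (an odd number) alternating from $+1$, so it equals $1$, as required.

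I expect this last step to be the main obstacle: it needs careful bookkeeping of the wrap-around partners, and it needs the argument that the sign $(-1)^{j-i}$ depends only on the cyclic distance, which again uses that $n$ is even. As a sanity check, setting $n=4$ should recover the form of Euler's identity in the Remark after Theorem \ref{thm:euler}.
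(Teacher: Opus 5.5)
Your proof is correct and follows essentially the same route as the paper: expand in terms of $\|x_p\|^2$ and $\mathrm{Re}(x_p,x_q)$, cancel the mixed terms for $1<j-i<n-1$, and use $\sum_{d=2}^{n-2}(-1)^d=1$ for even $n$ to obtain the coefficient $1$ of each $\|x_p\|^2$ in the middle sum. You are more explicit than the paper about the two places where evenness of $n$ enters, namely the closing pair $(1,n)$ and the parity of the wrap-around partners; the paper leaves both implicit.
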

\begin{proof}
A direct calculation shows the following.

\begin{itemize}
\item[(a)] The coefficient of $\|x_k\|^2$, for
$k\in\{1,\dots,n\}$, in the expansion of
\[
\sum_{\substack{1\le i<j\le n\\1<j-i<n-1}}
(-1)^{j-i}\|x_i-x_j\|^2
\]
is equal to $1$. Indeed,
\[
\sum_{j=k+2}^{k+n-2}(-1)^{j-k}
=
1,
\]
since $n$ is even and the sum contains $n-3$ terms.

\item[(b)] For $1<j-i<n-1$, the coefficients of
$\operatorname{Re}(x_i,x_j)$ in the expansions of
\[
\sum_{\substack{1\le i<j\le n\\1<j-i<n-1}}
(-1)^{j-i}\|x_i-x_j\|^2
\]
and
\[
\left\|
\sum_{i=1}^{n}(-1)^{i-1}x_i
\right\|^2
\]
are $-2(-1)^{j-i}$ and $2(-1)^{j-i}$, respectively.
\end{itemize}

Therefore, all mixed terms $\operatorname{Re}(x_i,x_j)$ with
$1<j-i<n-1$ cancel. Hence
\begin{align*}
&\sum_{\substack{1\le i<j\le n\\1<j-i<n-1}}
(-1)^{j-i}\|x_i-x_j\|^2
+
\left\|
\sum_{i=1}^{n}(-1)^{i-1}x_i
\right\|^2 \\
&\qquad =
2\sum_{i=1}^{n}\|x_i\|^2
-
2\sum_{i=1}^{n}\operatorname{Re}(x_i,x_{i+1}) \\
&\qquad =
\sum_{i=1}^{n}\|x_i-x_{i+1}\|^2,
\end{align*}
where $x_{n+1}=x_1$.

This completes the proof.
\end{proof}

\begin{corollary}[Douglas \cite{Douglas1981}]\label{cor:geneulG}
Let $n\ge 2$ be an integer, and let
$A_1,A_2,\ldots,A_{2n}$ be (not necessarily distinct) points in
$\mathbb{R}^m$. Let $G_1$ and $G_2$ denote the centroids of the point sets
\[
\{A_1,A_3,\ldots,A_{2n-1}\}
\quad\text{and}\quad
\{A_2,A_4,\ldots,A_{2n}\},
\]
respectively. Then
\begin{equation}\label{eq:geneulG}
\sum_{i=1}^{2n}|A_iA_{i+1}|^2
=
\sum_{\substack{1\le i<j\le 2n\\1<j-i<2n-1}}
(-1)^{j-i}|A_iA_j|^2
+
n^2|G_1G_2|^2,
\end{equation}
where $A_{2n+1}=A_1$.

In particular, for $n=2$, \eqref{eq:geneulG} reduces to Euler's
quadrilateral theorem.
\end{corollary}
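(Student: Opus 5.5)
The plan is to deduce Corollary \ref{cor:geneulG} directly from Theorem \ref{thm:geneulG}, applied with $2n$ in place of $n$ in $V=\mathbb{R}^m$. Let $x_i$ be the position vector of $A_i$ for $i=1,\dots,2n$. Since $n\ge 2$, the number of vectors $2n$ is even and at least $4$, so Theorem \ref{thm:geneulG} applies and gives
\[
\sum_{i=1}^{2n}\|x_i-x_{i+1}\|^2
=
\sum_{\substack{1\le i<j\le 2n\\ 1<j-i<2n-1}}
(-1)^{j-i}\|x_i-x_j\|^2
+
\Bigl\|\sum_{i=1}^{2n}(-1)^{i-1}x_i\Bigr\|^2 .
\]
Here $x_{2n+1}=x_1$. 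Each term $\|x_i-x_j\|^2$ is $|A_iA_j|^2$, so the first two sums are already in the form required by \eqref{eq:geneulG}.

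The only remaining step is to identify the alternating sum. Split it by parity of the index:
\[
\sum_{i=1}^{2n}(-1)^{i-1}x_i=\sum_{r=1}^{n}x_{2r-1}-\sum_{r=1}^{n}x_{2r}.
\]
The position vector of $G_1$ is $g_1=\frac1n\sum_{r=1}^{n}x_{2r-1}$, and that of $G_2$ is $g_2=\frac1n\sum_{r=1}^{n}x_{2r}$. Hence the alternating sum equals $n(g_1-g_2)$, and its squared norm is $n^2|G_1G_2|^2$. Substituting this gives \eqref{eq:geneulG}.

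For the special case $n=2$, the index range $1<j-i<3$ forces $j-i=2$. The pairs are therefore $(1,3)$ and $(2,4)$, each with sign $+1$. The centroids $G_1$ and $G_2$ are then the midpoints of the diagonals $A_1A_3$ and $A_2A_4$, and the identity becomes $\sum|A_iA_{i+1}|^2=|A_1A_3|^2+|A_2A_4|^2+4|G_1G_2|^2$. This is exactly Theorem \ref{thm:classiceuler}.

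I do not expect any real obstacle. The only points needing care are keeping the roles of $n$ and $2n$ straight when invoking Theorem \ref{thm:geneulG}, and checking that the cyclic convention $A_{2n+1}=A_1$ matches $x_{n+1}=x_1$ there.
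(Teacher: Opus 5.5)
Your proposal is correct and follows the paper's proof essentially verbatim: you apply Theorem \ref{thm:geneulG} to the $2n$ position vectors, rewrite the alternating sum as $n(g_1-g_2)$ to produce the $n^2|G_1G_2|^2$ term, and specialize to $n=2$ to recover Euler's theorem. Your explicit check that the index range forces $j-i=2$ when $n=2$ is a small detail the paper leaves implicit.
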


\begin{proof}
Let $x_i$ be the position vector of $A_i$ for
$i=1,\ldots,2n$, and let $x_{G_1}$ and $x_{G_2}$ denote the position
vectors of $G_1$ and $G_2$, respectively. Then
\[
\sum_{i=1}^{2n}(-1)^{i-1}x_i
=
n(x_{G_1}-x_{G_2}),
\]
and hence
\[
\left\|
\sum_{i=1}^{2n}(-1)^{i-1}x_i
\right\|^2
=
n^2\|x_{G_1}-x_{G_2}\|^2.
\]
Therefore, \eqref{eq:geneulG} follows immediately from
Theorem \ref{thm:geneulG}.

When $n=2$, we obtain Euler's quadrilateral theorem \eqref{eq:eulerred}.
\end{proof}

\begin{remark}
Corollary \ref{cor:geneulG} may be viewed as a particular extension of Euler's quadrilateral theorem to configurations consisting of an even number of points. Indeed, when $n=2$, the centroids $G_1$ and $G_2$ reduce to the midpoints of the diagonals of a quadrilateral, and \eqref{eq:geneulG} becomes precisely Euler's quadrilateral identity. Thus, \eqref{eq:geneulG} extends Euler's theorem from four points to an arbitrary even number of points by replacing the midpoint term with the squared distance between the centroids of the odd- and even-indexed point sets.
\end{remark}

\begin{corollary}[Generalized Parallelogram Identity]\label{cor:genparid}
Let $n\geq 2$ be an integer, and let
$A_1,A_2,\ldots,A_{2n}$ be (not necessarily distinct) points in
$\mathbb{R}^m$. Let $G_1$ and $G_2$ denote the centroids of the point sets
\[
\{A_1,A_3,\ldots,A_{2n-1}\}
\quad\text{and}\quad
\{A_2,A_4,\ldots,A_{2n}\},
\]
respectively. Then
\begin{equation}\label{eq:genparid}
\sum_{i=1}^{2n}|A_iA_{i+1}|^2
=
\sum_{\substack{1\le i<j\le 2n\\1<j-i<2n-1}}
(-1)^{j-i}|A_iA_j|^2
\end{equation}
if and only if $G_1=G_2$ (see Figure \ref{fig:genparid}).
\end{corollary}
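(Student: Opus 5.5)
The plan is to read the statement off directly from Corollary \ref{cor:geneulG}, which is already established for every $n\ge 2$ and every choice of points $A_1,\ldots,A_{2n}$ in $\mathbb{R}^m$. Subtracting the right-hand side of \eqref{eq:genparid} from its left-hand side and using \eqref{eq:geneulG}, we get
\[
\sum_{i=1}^{2n}|A_iA_{i+1}|^2-\sum_{\substack{1\le i<j\le 2n\\1<j-i<2n-1}}(-1)^{j-i}|A_iA_j|^2 = n^2|G_1G_2|^2 .
\]
So \eqref{eq:genparid} holds exactly when $n^2|G_1G_2|^2=0$.

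Both directions then follow in one step. Since $n\ge 2$, we have $n^2\neq 0$, so $n^2|G_1G_2|^2=0$ is equivalent to $|G_1G_2|=0$. Because the Euclidean distance is a metric on $\mathbb{R}^m$, this holds if and only if $G_1=G_2$. In vector form, this is the observation already made in the proof of Corollary \ref{cor:geneulG}: $\sum_{i=1}^{2n}(-1)^{i-1}x_i=n(x_{G_1}-x_{G_2})$. This vector vanishes exactly when the centroids coincide, and its squared norm is zero exactly when it vanishes, by positive definiteness of the inner product.

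There is no real obstacle. The only point worth noting is that Theorem \ref{thm:geneulG} is stated for an even number $2n\ge 4$ of vectors, which matches the hypothesis $n\ge 2$ through Corollary \ref{cor:geneulG}. It also seems worth remarking that the same argument gives the inner-product-space version with no change: for vectors $x_1,\ldots,x_{2n}$ in $V$, the reduced identity holds if and only if $\sum(-1)^{i-1}x_i=0$. When $n=2$ this is the classical parallelogram theorem, since coinciding diagonal midpoints characterise a parallelogram.
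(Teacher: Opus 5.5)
Your proposal is correct and is exactly the argument the paper intends: the paper gives no separate proof, and the corollary follows from Corollary \ref{cor:geneulG}, because the difference of the two sides of \eqref{eq:genparid} equals $n^2|G_1G_2|^2$, which vanishes if and only if $G_1=G_2$. Your explicit appeal to $n^2\neq 0$ and to the positive definiteness of the norm supplies the one-line justification of both directions that the paper leaves unstated.
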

\begin{figure}
    \centering
\begin{tikzpicture}
\clip(-5,-5) rectangle (10,10);
\fill[line width=1.pt,color=qqzzqq,fill=qqzzqq,fill opacity=0.15] (3.708033991789106,4.912717585742084) -- (4.,0.) -- (8.912717585742083,0.29196600821089325) -- (8.620751577531191,5.204683593952977) -- cycle;
\fill[line width=1.pt,color=qqzzqq,fill=qqzzqq,fill opacity=0.15] (4.,0.) -- (0.,3.) -- (-3.,-1.) -- (1.,-4.) -- cycle;
\fill[line width=1.pt,color=qqzzqq,fill=qqzzqq,fill opacity=0.15] (0.,3.) -- (3.708033991789106,4.912717585742084) -- (1.7953164060470224,8.62075157753119) -- (-1.9127175857420835,6.708033991789106) -- cycle;
\fill[line width=1.pt,color=qqzzqq,fill=qqzzqq,fill opacity=0.15] (1.,1.) -- (1.,5.) -- (-3.,5.) -- (-3.,1.) -- cycle;
\fill[line width=1.pt,color=qqzzqq,fill=qqzzqq,fill opacity=0.15] (5.725767782492991,1.910924856620419) -- (1.,1.) -- (1.9109248566204182,-3.72576778249299) -- (6.636692639113409,-2.814842925872572) -- cycle;
\fill[line width=1.pt,color=qqzzqq,fill=qqzzqq,fill opacity=0.15] (1.,5.) -- (5.725767782492991,1.910924856620419) -- (8.814842925872572,6.636692639113409) -- (4.089075143379582,9.72576778249299) -- cycle;
\fill[line width=1.pt,color=zzttqq,fill=zzttqq,fill opacity=0.15] (0.,3.) -- (1.,5.) -- (-1.,6.) -- (-2.,4.) -- cycle;
\fill[line width=1.pt,color=zzttqq,fill=zzttqq,fill opacity=0.15] (1.,1.) -- (0.,3.) -- (-2.,2.) -- (-1.,0.) -- cycle;
\fill[line width=1.pt,color=zzttqq,fill=zzttqq,fill opacity=0.15] (4.,0.) -- (1.,1.) -- (0.,-2.) -- (3.,-3.) -- cycle;
\fill[line width=1.pt,color=zzttqq,fill=zzttqq,fill opacity=0.15] (5.725767782492991,1.910924856620419) -- (4.,0.) -- (5.910924856620419,-1.7257677824929902) -- (7.636692639113409,0.185157074127428) -- cycle;
\fill[line width=1.pt,color=zzttqq,fill=zzttqq,fill opacity=0.15] (3.708033991789106,4.912717585742084) -- (5.725767782492991,1.910924856620419) -- (8.727560511614655,3.9286586473243035) -- (6.709826720910771,6.930451376445969) -- cycle;
\fill[line width=1.pt,color=zzttqq,fill=zzttqq,fill opacity=0.15] (1.,5.) -- (3.708033991789106,4.912717585742084) -- (3.7953164060470224,7.620751577531188) -- (1.0872824142579176,7.708033991789105) -- cycle;
\fill[line width=1.pt,color=blue,fill=blue,fill opacity=0.15] (4.,0.) -- (1.,5.) -- (-4.,2.) -- (-1.,-3.) -- cycle;
\fill[line width=1.pt,color=blue,fill=blue,fill opacity=0.15] (5.725767782492991,1.910924856620419) -- (0.,3.) -- (-1.089075143379581,-2.72576778249299) -- (4.636692639113409,-3.8148429258725716) -- cycle;
\fill[line width=1.pt,color=blue,fill=blue,fill opacity=0.15] (3.708033991789106,4.912717585742084) -- (1.,1.) -- (4.912717585742083,-1.7080339917891054) -- (7.620751577531189,2.2046835939529776) -- cycle;
\draw [line width=1.pt] (1.,5.)-- (0.,3.);
\draw [line width=1.pt] (0.,3.)-- (1.,1.);
\draw [line width=1.pt] (1.,1.)-- (4.,0.);
\draw [line width=1.pt] (4.,0.)-- (5.725767782492991,1.910924856620419);
\draw [line width=1.pt] (5.725767782492991,1.910924856620419)-- (3.708033991789106,4.912717585742084);
\draw [line width=1.pt] (3.708033991789106,4.912717585742084)-- (1.,5.);
\draw [line width=2.4pt,color=blue] (1.,5.)-- (1.,1.);
\draw [line width=1.pt] (1.,5.)-- (4.,0.);
\draw [line width=2.4pt,color=blue] (1.,5.)-- (5.725767782492991,1.910924856620419);
\draw [line width=2.4pt,color=qqzzqq] (0.,3.)-- (4.,0.);
\draw [line width=1.pt] (0.,3.)-- (5.725767782492991,1.910924856620419);
\draw [line width=2.4pt,color=qqzzqq] (0.,3.)-- (3.708033991789106,4.912717585742084);
\draw [line width=2.4pt,color=blue] (1.,1.)-- (5.725767782492991,1.910924856620419);
\draw [line width=1.pt] (1.,1.)-- (3.708033991789106,4.912717585742084);
\draw [line width=2.4pt,color=qqzzqq] (4.,0.)-- (3.708033991789106,4.912717585742084);
\draw [line width=1.pt,color=qqzzqq] (4.,0.)-- (8.912717585742083,0.29196600821089325);
\draw [line width=1.pt,color=qqzzqq] (8.912717585742083,0.29196600821089325)-- (8.620751577531191,5.204683593952977);
\draw [line width=1.pt,color=qqzzqq] (8.620751577531191,5.204683593952977)-- (3.708033991789106,4.912717585742084);
\draw [line width=1.pt,color=qqzzqq] (4.,0.)-- (0.,3.);
\draw [line width=1.pt,color=qqzzqq] (0.,3.)-- (-3.,-1.);
\draw [line width=1.pt,color=qqzzqq] (-3.,-1.)-- (1.,-4.);
\draw [line width=1.pt,color=qqzzqq] (1.,-4.)-- (4.,0.);
\draw [line width=1.pt,color=qqzzqq] (0.,3.)-- (3.708033991789106,4.912717585742084);
\draw [line width=1.pt,color=qqzzqq] (3.708033991789106,4.912717585742084)-- (1.7953164060470224,8.62075157753119);
\draw [line width=1.pt,color=qqzzqq] (1.7953164060470224,8.62075157753119)-- (-1.9127175857420835,6.708033991789106);
\draw [line width=1.pt,color=qqzzqq] (-1.9127175857420835,6.708033991789106)-- (0.,3.);
\draw [line width=1.pt,color=qqzzqq] (1.,1.)-- (1.,5.);
\draw [line width=1.pt,color=qqzzqq] (1.,5.)-- (-3.,5.);
\draw [line width=1.pt,color=qqzzqq] (-3.,5.)-- (-3.,1.);
\draw [line width=1.pt,color=qqzzqq] (-3.,1.)-- (1.,1.);
\draw [line width=1.pt,color=qqzzqq] (5.725767782492991,1.910924856620419)-- (1.,1.);
\draw [line width=1.pt,color=qqzzqq] (1.,1.)-- (1.9109248566204182,-3.72576778249299);
\draw [line width=1.pt,color=qqzzqq] (1.9109248566204182,-3.72576778249299)-- (6.636692639113409,-2.814842925872572);
\draw [line width=1.pt,color=qqzzqq] (6.636692639113409,-2.814842925872572)-- (5.725767782492991,1.910924856620419);
\draw [line width=1.pt,color=qqzzqq] (1.,5.)-- (5.725767782492991,1.910924856620419);
\draw [line width=1.pt,color=qqzzqq] (5.725767782492991,1.910924856620419)-- (8.814842925872572,6.636692639113409);
\draw [line width=1.pt,color=qqzzqq] (8.814842925872572,6.636692639113409)-- (4.089075143379582,9.72576778249299);
\draw [line width=1.pt,color=qqzzqq] (4.089075143379582,9.72576778249299)-- (1.,5.);
\draw [line width=1.pt,color=zzttqq] (0.,3.)-- (1.,5.);
\draw [line width=1.pt,color=zzttqq] (1.,5.)-- (-1.,6.);
\draw [line width=1.pt,color=zzttqq] (-1.,6.)-- (-2.,4.);
\draw [line width=1.pt,color=zzttqq] (-2.,4.)-- (0.,3.);
\draw [line width=1.pt,color=zzttqq] (1.,1.)-- (0.,3.);
\draw [line width=1.pt,color=zzttqq] (0.,3.)-- (-2.,2.);
\draw [line width=1.pt,color=zzttqq] (-2.,2.)-- (-1.,0.);
\draw [line width=1.pt,color=zzttqq] (-1.,0.)-- (1.,1.);
\draw [line width=1.pt,color=zzttqq] (4.,0.)-- (1.,1.);
\draw [line width=1.pt,color=zzttqq] (1.,1.)-- (0.,-2.);
\draw [line width=1.pt,color=zzttqq] (0.,-2.)-- (3.,-3.);
\draw [line width=1.pt,color=zzttqq] (3.,-3.)-- (4.,0.);
\draw [line width=1.pt,color=zzttqq] (5.725767782492991,1.910924856620419)-- (4.,0.);
\draw [line width=1.pt,color=zzttqq] (4.,0.)-- (5.910924856620419,-1.7257677824929902);
\draw [line width=1.pt,color=zzttqq] (5.910924856620419,-1.7257677824929902)-- (7.636692639113409,0.185157074127428);
\draw [line width=1.pt,color=zzttqq] (7.636692639113409,0.185157074127428)-- (5.725767782492991,1.910924856620419);
\draw [line width=1.pt,color=zzttqq] (3.708033991789106,4.912717585742084)-- (5.725767782492991,1.910924856620419);
\draw [line width=1.pt,color=zzttqq] (5.725767782492991,1.910924856620419)-- (8.727560511614655,3.9286586473243035);
\draw [line width=1.pt,color=zzttqq] (8.727560511614655,3.9286586473243035)-- (6.709826720910771,6.930451376445969);
\draw [line width=1.pt,color=zzttqq] (6.709826720910771,6.930451376445969)-- (3.708033991789106,4.912717585742084);
\draw [line width=1.pt,color=zzttqq] (1.,5.)-- (3.708033991789106,4.912717585742084);
\draw [line width=1.pt,color=zzttqq] (3.708033991789106,4.912717585742084)-- (3.7953164060470224,7.620751577531188);
\draw [line width=1.pt,color=zzttqq] (3.7953164060470224,7.620751577531188)-- (1.0872824142579176,7.708033991789105);
\draw [line width=1.pt,color=zzttqq] (1.0872824142579176,7.708033991789105)-- (1.,5.);
\draw [line width=1.pt,color=blue] (4.,0.)-- (1.,5.);
\draw [line width=1.pt,color=blue] (1.,5.)-- (-4.,2.);
\draw [line width=1.pt,color=blue] (-4.,2.)-- (-1.,-3.);
\draw [line width=1.pt,color=blue] (-1.,-3.)-- (4.,0.);
\draw [line width=1.pt,color=blue] (5.725767782492991,1.910924856620419)-- (0.,3.);
\draw [line width=1.pt,color=blue] (0.,3.)-- (-1.089075143379581,-2.72576778249299);
\draw [line width=1.pt,color=blue] (-1.089075143379581,-2.72576778249299)-- (4.636692639113409,-3.8148429258725716);
\draw [line width=1.pt,color=blue] (4.636692639113409,-3.8148429258725716)-- (5.725767782492991,1.910924856620419);
\draw [line width=1.pt,color=blue] (3.708033991789106,4.912717585742084)-- (1.,1.);
\draw [line width=1.pt,color=blue] (1.,1.)-- (4.912717585742083,-1.7080339917891054);
\draw [line width=1.pt,color=blue] (4.912717585742083,-1.7080339917891054)-- (7.620751577531189,2.2046835939529776);
\draw [line width=1.pt,color=blue] (7.620751577531189,2.2046835939529776)-- (3.708033991789106,4.912717585742084);
\begin{scriptsize}
\draw [fill=ududff] (1.,5.) circle (2.0pt);
\draw[color=black] (0.75,5.5) node {$A_1$};
\draw [fill=ududff] (0.,3.) circle (2.0pt);
\draw[color=black] (-0.5,3.05) node {$A_2$};
\draw [fill=blue] (1.,1.) circle (2.0pt);
\draw[color=black] (0.6,0.55) node {$A_3$};
\draw [fill=ududff] (4.,0.) circle (2.0pt);
\draw[color=black] (4.1,-0.3) node {$A_4$};
\draw [fill=ududff] (5.725767782492991,1.910924856620419) circle (2.0pt);
\draw[color=black] (6.1,1.9) node {$A_5$};
\draw [fill=ududff] (3.708033991789106,4.912717585742084) circle (2.0pt);
\draw[color=black] (3.9,5.35) node {$A_6$};
\draw [fill=wewdxt] (2.58,2.64) circle (2.0pt);
\draw[color=black] (2.8,2.9) node {$G_1$};
\draw[color=black] (2.4,2.2) node {$G_2$};
\draw [fill=wewdxt] (8.912717585742083,0.29196600821089325) circle (2.0pt);
\draw [fill=wewdxt] (8.620751577531191,5.204683593952977) circle (2.0pt);
\draw [fill=wewdxt] (-3.,-1.) circle (2.0pt);
\draw [fill=wewdxt] (1.,-4.) circle (2.0pt);
\draw [fill=wewdxt] (1.7953164060470224,8.62075157753119) circle (2.0pt);
\draw [fill=wewdxt] (-1.9127175857420835,6.708033991789106) circle (2.0pt);
\draw [fill=wewdxt] (-3.,5.) circle (2.0pt);
\draw [fill=wewdxt] (-3.,1.) circle (2.0pt);
\draw [fill=wewdxt] (1.9109248566204182,-3.72576778249299) circle (2.0pt);
\draw [fill=wewdxt] (6.636692639113409,-2.814842925872572) circle (2.0pt);
\draw [fill=wewdxt] (8.814842925872572,6.636692639113409) circle (2.0pt);
\draw [fill=wewdxt] (4.089075143379582,9.72576778249299) circle (2.0pt);
\draw [fill=wewdxt] (-1.,6.) circle (2.0pt);
\draw [fill=wewdxt] (-2.,4.) circle (2.0pt);
\draw [fill=wewdxt] (-2.,2.) circle (2.0pt);
\draw [fill=wewdxt] (-1.,0.) circle (2.0pt);
\draw [fill=wewdxt] (0.,-2.) circle (2.0pt);
\draw [fill=wewdxt] (3.,-3.) circle (2.0pt);
\draw [fill=wewdxt] (5.910924856620419,-1.7257677824929902) circle (2.0pt);
\draw [fill=wewdxt] (7.636692639113409,0.185157074127428) circle (2.0pt);
\draw [fill=wewdxt] (8.727560511614655,3.9286586473243035) circle (2.0pt);
\draw [fill=wewdxt] (6.709826720910771,6.930451376445969) circle (2.0pt);
\draw [fill=wewdxt] (3.7953164060470224,7.620751577531188) circle (2.0pt);
\draw [fill=wewdxt] (1.0872824142579176,7.708033991789105) circle (2.0pt);
\draw [fill=wewdxt] (-4.,2.) circle (2.0pt);
\draw [fill=wewdxt] (-1.,-3.) circle (2.0pt);
\draw [fill=wewdxt] (-1.089075143379581,-2.72576778249299) circle (2.0pt);
\draw [fill=wewdxt] (4.636692639113409,-3.8148429258725716) circle (2.0pt);
\draw [fill=wewdxt] (4.912717585742083,-1.7080339917891054) circle (2.0pt);
\draw [fill=wewdxt] (7.620751577531189,2.2046835939529776) circle (2.0pt);
\end{scriptsize}
\end{tikzpicture}
    \caption{Illustration of the generalized parallelogram identity \eqref{eq:genparid} for $n=6$ in $\mathbb{R}^m$. When the centroids $G_1$ and $G_2$ of triangles $\triangle A_1A_3A_5$ and $\triangle A_2A_4A_6$, \,respectively, are equal, the total area of the brown squares on the six sides $\overline{A_1A_2}$, $\overline{A_2A_3}$, \dots, $\overline{A_5A_6}$, and $\overline{A_6A_1}$ equal the combined areas of the six green squares on the short diagonals $\overline{A_1A_3}$, $\overline{A_2A_4}$, \dots, $\overline{A_5A_1}$ and $\overline{A_6A_2}$ minus the combined \,areas of the three blue squares on the long diagonals $\overline{A_1A_4}$, $\overline{A_2A_5}$ and $\overline{A_3A_6}$. The squares are drawn schematically; they need not lie in the same plane, since the points $A_1,A_2, \dots, A_6$ are not necessarily coplanar.}
    \label{fig:genparid}
\end{figure}

\begin{remark}
Corollary \ref{cor:genparid} recovers, in geometric form, a result of Douglas \cite{Douglas1981} for an even number of vectors in $\mathbb R^m$. 
\end{remark}

The classical Apollonius theorem expresses the sum of the squared distances from a point to the endpoints of a segment in terms of the midpoint of the segment. For a finite number of vectors, the role of the midpoint is naturally replaced by the centroid. In this section we derive a centroid identity in an arbitrary inner product space and obtain several geometric consequences for point configurations in $\mathbb{R}^m$.
\begin{theorem}[Huygens–Steiner Identity]\label{thm:huy-stei}
    Let $x_1,x_2,\ldots,x_n$ be vectors in an inner product space $V$, and
\begin{equation*}
        x_{G}:=\frac{1}{n}\sum_{k=1}^n x_k,
\end{equation*}
be their centroid. If $x\in V$ be an arbitrary vector, then
\begin{equation}\label{eq:huy-stei}
    \sum_{k=1}^{n}\Vert x_k-x\Vert^2=\sum_{k=1}^{n}\Vert x_k-x_G\Vert^2+n \Vert x-x_G\Vert^2.
\end{equation}
\end{theorem}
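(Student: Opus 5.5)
The plan is to prove \eqref{eq:huy-stei} by the standard ``insert the centroid'' decomposition. For each $k$ we write
\[
x_k-x=(x_k-x_G)+(x_G-x).
\]
Expanding the squared norm in the inner product space $V$ gives
\[
\Vert x_k-x\Vert^2=\Vert x_k-x_G\Vert^2+\Vert x_G-x\Vert^2+2\,\mathrm{Re}\,(x_k-x_G,\,x_G-x).
\]
This uses only sesquilinearity and the relation $(u,v)+(v,u)=2\,\mathrm{Re}(u,v)$. It therefore holds verbatim for $\mathbb K=\mathbb R$ and $\mathbb K=\mathbb C$, just as in the proofs of Theorem \ref{thm:pariden} and Theorem \ref{thm:geneulG}.

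Next we sum this identity over $k=1,\ldots,n$. The first terms give $\sum_{k}\Vert x_k-x_G\Vert^2$. The second term does not depend on $k$, so it contributes $n\Vert x-x_G\Vert^2$.

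The only point that needs care is the cross term. By linearity in the first slot, the mixed terms add up to
\[
2\,\mathrm{Re}\Bigl(\sum_{k=1}^n (x_k-x_G),\,x_G-x\Bigr).
\]
By the definition of the centroid, $\sum_{k=1}^n (x_k-x_G)=n x_G-n x_G=0$. Hence the cross term vanishes, and \eqref{eq:huy-stei} follows.

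There is no real obstacle here. The one step where an error could slip in is pulling the sum inside the inner product in the complex case. This is legitimate because the summation runs over the first argument, where the inner product is linear rather than conjugate-linear; in any case only the real part of a zero vector's pairing is involved. As a consistency check, for $n=2$ the identity gives $\Vert x_1-x\Vert^2+\Vert x_2-x\Vert^2=\tfrac12\Vert x_1-x_2\Vert^2+2\Vert x-\bar x_{12}\Vert^2$, which is exactly Apollonius' identity \eqref{eq:apollonius}.
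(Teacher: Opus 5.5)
Your proof is correct and is essentially the paper's argument. The paper likewise expands $\Vert X-Y\Vert^2$ with $X=x_k-x_G$ and $Y=x-x_G$, sums over $k$, and removes the cross term using $\sum_{k=1}^n x_k=nx_G$; your remark about the complex case is harmless but unnecessary, since additivity holds in both slots of the inner product.
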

\begin{proof} We take $X=x_k-x_G$ and $Y=x-x_G$ in the following identity:
\begin{equation*}
    \Vert X-Y\Vert^2= \Vert X \Vert^2+\Vert Y \Vert^2-2\mathrm{Re} (X, Y)
\end{equation*}
and summing over $k$, to obtain
    \begin{equation*}
        \sum_{k=1}^{n}\Vert x_k-x\Vert^2=\sum_{k=1}^{n}\Vert x_k-x_G\Vert^2+n \Vert x-x_G\Vert^2-2\mathrm{Re}\left(\sum_{k=1}^{n}x_k-nx_G,x-x_G \right),
    \end{equation*}
which  gives \eqref{eq:huy-stei}, since the last term vanishes as $\sum_{k=1}^{n}x_k=nx_G$.
\end{proof}

\begin{corollary}\label{cor:xkxgn}
    Let $x_1,x_2,\ldots,x_n$ be vectors in an inner product space $V$, and $x_{G}$ be their centroid. Then
\begin{equation}\label{eq:xkxgn}
    \sum_{1\leq k<l\leq n}\Vert x_k-x_l\Vert^2=n\sum_{k=1}^{n}\Vert x_k-x_G\Vert^2.
\end{equation}
\end{corollary}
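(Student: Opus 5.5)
The natural route is through the Huygens--Steiner identity (Theorem \ref{thm:huy-stei}), applied once with each of the given vectors as the base point $x$. Specifically, for a fixed index $l\in\{1,\dots,n\}$, take $x=x_l$ in \eqref{eq:huy-stei}:
\[
\sum_{k=1}^{n}\Vert x_k-x_l\Vert^2=\sum_{k=1}^{n}\Vert x_k-x_G\Vert^2+n\Vert x_l-x_G\Vert^2 .
\]
Then sum this relation over $l=1,\dots,n$.

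On the right-hand side, the first sum does not depend on $l$, so summing over $l$ simply multiplies it by $n$. The second sum becomes $n\sum_{l}\Vert x_l-x_G\Vert^2$. Together the right-hand side totals
\[
2n\sum_{k=1}^{n}\Vert x_k-x_G\Vert^2 .
\]

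On the left-hand side we obtain the double sum $\sum_{k,l}\Vert x_k-x_l\Vert^2$ over all ordered pairs. The diagonal terms $k=l$ vanish. The summand is symmetric in $k$ and $l$ because $\Vert x_k-x_l\Vert=\Vert x_l-x_k\Vert$, so each unordered pair $k<l$ appears exactly twice. The left-hand side therefore equals
\[
2\sum_{1\le k<l\le n}\Vert x_k-x_l\Vert^2 .
\]

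Dividing both sides by $2$ gives \eqref{eq:xkxgn}. The only step needing care is this bookkeeping of ordered versus unordered pairs, which accounts for the factor $2$ on both sides.

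If one prefers to avoid Theorem \ref{thm:huy-stei}, a direct check also works: expand $\Vert x_k-x_l\Vert^2$ and use $\sum_k x_k=nx_G$ to rewrite both sides in terms of $\sum_k\Vert x_k\Vert^2$ and $\Vert\sum_k x_k\Vert^2$. Both sides then reduce to
\[
n\sum_{k}\Vert x_k\Vert^2-\Big\Vert\sum_k x_k\Big\Vert^2 .
\]
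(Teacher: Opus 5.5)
Your proof is correct and is essentially the paper's argument: substitute $x=x_l$ into the Huygens--Steiner identity \eqref{eq:huy-stei}, sum over $l$, and use that the ordered double sum equals $2\sum_{1\le k<l\le n}\Vert x_k-x_l\Vert^2$. Your direct expansion to $n\sum_k\Vert x_k\Vert^2-\bigl\Vert\sum_k x_k\bigr\Vert^2$ is also a correct check.
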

\begin{proof}
    Applying \eqref{eq:huy-stei} for $x=x_l$ and summing over $l=1,...,n$, we obtain
\begin{equation}\label{eq:xkxgnpr}
    \sum_{l=1}^{n}\sum_{k=1}^{n}\Vert x_k-x_l\Vert^2=n\sum_{k=1}^{n}\Vert x_k-x_G\Vert^2+n \sum_{l=1}^{n}\Vert x_l-x_G\Vert^2
\end{equation}
Since
\begin{equation*}
    \sum_{l=1}^{n}\sum_{k=1}^{n}\Vert x_k-x_l\Vert^2=2\sum_{1\leq k<l\leq n}\Vert x_k-x_l\Vert^2.
\end{equation*}
equation \eqref{eq:xkxgnpr} reduces to \eqref{eq:xkxgn}, completing the proof. 
\end{proof}

\begin{remark}
When $V=\mathbb{R}^m$ and the vectors $x_1,\ldots,x_n$ are interpreted as the position vectors of points
$A_1,\ldots,A_n$ with centroid $G$, identity \eqref{eq:xkxgn} becomes
\[
\sum_{1\le k<l\le n}|A_kA_l|^2
=
n\sum_{k=1}^n |A_kG|^2.
\]
This classical relation is often attributed to Leibniz and may be viewed as a centroid version of Apollonius' theorem. It expresses the total pairwise squared distance of a finite point configuration in terms of the squared distances from its centroid.
\end{remark}

\begin{corollary}[Generalized Apollonius]\label{cor:genapoll}
Let $x_1,x_2,\ldots,x_n$ be vectors in an inner product space $V$, and let $x_G$ denote their centroid. Then, for every $x\in V$,
\begin{equation*}
    \sum_{k=1}^{n}\Vert x_k-x\Vert^2
    =
    \frac{1}{n}\sum_{1\leq k< l\leq n}\Vert x_k-x_l\Vert^2
    +
    n\Vert x-x_G\Vert^2.
\end{equation*}
\end{corollary}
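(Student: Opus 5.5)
The plan is to combine the two results immediately preceding the statement: the Huygens--Steiner identity (Theorem \ref{thm:huy-stei}) and the Leibniz-type relation of Corollary \ref{cor:xkxgn}. We start from \eqref{eq:huy-stei}, applied with the given arbitrary vector $x\in V$ and the centroid $x_G$ of $x_1,\ldots,x_n$:
\[
\sum_{k=1}^{n}\Vert x_k-x\Vert^2=\sum_{k=1}^{n}\Vert x_k-x_G\Vert^2+n\Vert x-x_G\Vert^2 .
\]
The only term that does not yet appear in the desired identity is $\sum_{k}\Vert x_k-x_G\Vert^2$. We will remove it using \eqref{eq:xkxgn}.

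Indeed, Corollary \ref{cor:xkxgn} gives
\[
\sum_{k=1}^{n}\Vert x_k-x_G\Vert^2=\frac{1}{n}\sum_{1\le k<l\le n}\Vert x_k-x_l\Vert^2 ,
\]
after dividing by $n\ge 1$. Substituting this into the Huygens--Steiner identity produces exactly the claimed formula. Both ingredients hold in any real or complex inner product space, so no separate treatment of $\mathbb K=\mathbb C$ is needed.

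There is no genuine obstacle. The only point requiring care is bookkeeping: Corollary \ref{cor:xkxgn} sums over unordered pairs $k<l$, not over all ordered pairs. Hence the factor is $\tfrac1n$ rather than $\tfrac1{2n}$, which matches the statement. As a sanity check, for $n=2$ and $x=z$ the identity reduces to $\Vert x_1-z\Vert^2+\Vert x_2-z\Vert^2=\tfrac12\Vert x_1-x_2\Vert^2+2\Vert z-\bar x_{12}\Vert^2$, which is Apollonius' identity \eqref{eq:apollonius}. This confirms the normalization and justifies the name of the corollary.
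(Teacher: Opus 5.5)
Your proposal is correct and is the same as the paper's proof: divide \eqref{eq:xkxgn} by $n$ and substitute it into the Huygens--Steiner identity \eqref{eq:huy-stei}. Your $n=2$ check, which recovers \eqref{eq:apollonius}, confirms the normalization but is not needed for the argument.
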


\begin{proof}
Substituting \eqref{eq:xkxgn} into \eqref{eq:huy-stei} yields the result.
\end{proof}

\begin{remark}\label{rem:genapollrm}
Let $A_1,A_2,\ldots,A_n$ be points in $\mathbb{R}^m$ with centroid $G$. Then, for every point $P\in\mathbb{R}^m$,
\begin{equation}\label{eq:genapollo}
    \sum_{k=1}^{n}|A_kP|^2
    =
    \frac{1}{n}\sum_{1\le k<l\le n}|A_kA_l|^2
    +
    n|PG|^2.
\end{equation}
Indeed, if $x_k$ and $x$ denote the position vectors of $A_k$ and $P$, respectively, then equation \eqref{eq:genapollo} follows immediately from Corollary \ref{cor:genapoll}.
\end{remark}

\begin{corollary}\label{cor:sqsumsOG}
    Let $A_1,A_2,\ldots,A_n$ be $n$ points in $\mathbb{R}^m$ and $G$ be their centroid. If $O\in \mathbb{R}^m$ such that $|OA_k|=R$ for $k=1,\ldots n$, then
                \begin{equation*}
                    \sum_{1\leq k<l\leq n}|A_kA_l|^2=n^2(R^2-|OG|^2).
                \end{equation*}
\end{corollary}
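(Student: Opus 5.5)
The plan is to apply the generalized Apollonius identity of Remark \ref{rem:genapollrm}, equation \eqref{eq:genapollo}, with the point $P$ chosen to be the common center $O$. Since every $A_k$ lies at distance $R$ from $O$, the left-hand side $\sum_{k=1}^n |A_kO|^2$ collapses to $nR^2$. The identity then reads
\[
nR^2=\frac{1}{n}\sum_{1\le k<l\le n}|A_kA_l|^2+n|OG|^2 .
\]

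The remaining step is purely algebraic. I would subtract $n|OG|^2$ from both sides and multiply through by $n$, which gives
\[
\sum_{1\le k<l\le n}|A_kA_l|^2=n^2\bigl(R^2-|OG|^2\bigr),
\]
and this is the claim.

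Equivalently, one could start from the Huygens--Steiner identity \eqref{eq:huy-stei} with $x$ the position vector of $O$, and then replace $\sum_k\|x_k-x_G\|^2$ using Corollary \ref{cor:xkxgn}. This is exactly how Corollary \ref{cor:genapoll} itself was obtained, so the two routes coincide.

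I expect no real obstacle. The only point needing care is the normalization: the sum runs over unordered pairs $k<l$, matching the convention in \eqref{eq:xkxgn}, so there is no factor of $2$ to track. As a by-product, the formula shows $|OG|\le R$, with equality exactly when all the points coincide.
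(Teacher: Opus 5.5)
Your proof is correct and matches the paper's argument exactly: set $P=O$ in \eqref{eq:genapollo}, use $|OA_k|=R$ to turn the left-hand side into $nR^2$, and rearrange. Your side remark is also valid: $|OG|\le R$, with equality exactly when all the $A_k$ coincide.
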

\begin{proof} Take $P=O$ in equation \eqref{eq:genapollo} and use the assumption $|OA_k|=R$ for $k=1,\ldots n$.
\end{proof}

\begin{proposition}\label{prop:disverremcent}
    Let $x_1,x_2,\ldots,x_n$ be vectors in an inner product space $V$, and let $x_G$ denote their centroid. For each $k$, let $x_{G_k}$ denote the centroid of the vectors other than $x_k$. Then
\begin{equation*}
    \sum_{k=1}^{n}\Vert x_k-x_{G_k}\Vert^2
    =
    \frac{n^2}{(n-1)^2}\left(\sum_{k=1}^{n}\Vert x_k\Vert^2-
    n\Vert x_G\Vert^2\right).
\end{equation*}
\end{proposition}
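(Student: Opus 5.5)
The plan rests on one observation: each vector $x_k-x_{G_k}$ is a scalar multiple of $x_k-x_G$. Since
\[
x_{G_k}=\frac{1}{n-1}\sum_{l\ne k}x_l=\frac{n x_G-x_k}{n-1},
\]
we have
\[
x_k-x_{G_k}=\frac{(n-1)x_k-nx_G+x_k}{n-1}=\frac{n}{n-1}\,(x_k-x_G).
\]
This reduces the left-hand side to
\[
\frac{n^2}{(n-1)^2}\sum_{k=1}^n\|x_k-x_G\|^2 .
\]
(We implicitly need $n\ge 2$ so that $x_{G_k}$ is defined.)

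It then remains to show that
\[
\sum_{k=1}^n\|x_k-x_G\|^2=\sum_{k=1}^n\|x_k\|^2-n\|x_G\|^2 .
\]
This is exactly the Huygens--Steiner identity \eqref{eq:huy-stei} with the choice $x=0$. With that choice the left side of \eqref{eq:huy-stei} becomes $\sum_k\|x_k\|^2$ and the last term becomes $n\|x_G\|^2$; rearranging gives the required identity.

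Alternatively, one can expand $\|x_k-x_G\|^2=\|x_k\|^2-2\,\mathrm{Re}(x_k,x_G)+\|x_G\|^2$ and sum over $k$, using $\sum_k x_k=nx_G$. The cross terms then sum to $-2n\|x_G\|^2$, which gives the same result.

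Combining the two steps yields the proposition. There is no real obstacle here. The only step needing care is the algebra in computing $x_{G_k}$ in terms of $x_G$, which is what produces the factor $n/(n-1)$.
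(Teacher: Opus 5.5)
Your proposal is correct and follows the paper's proof essentially verbatim: you write $x_{G_k}=(nx_G-x_k)/(n-1)$ to get $x_k-x_{G_k}=\frac{n}{n-1}(x_k-x_G)$, sum over $k$, and then apply the Huygens--Steiner identity with $x=0$. Your remark that $n\ge 2$ is needed for $x_{G_k}$ to be defined is a worthwhile addition, since the paper leaves it implicit.
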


\begin{proof}
    \begin{align*}
        \Vert x_k-x_{G_k}\Vert^2&=\left\Vert x_k-\frac{nx_{G}-x_k}{n-1}\right\Vert^2\\
        &=\frac{n^2}{(n-1)^2} \Vert x_k-x_G\Vert^2
    \end{align*}
Summing over $k$, we get 
    \begin{align*}
        \sum_{k=1}^{n}\Vert x_k-x_{G_k}\Vert^2=\frac{n^2}{(n-1)^2} \sum_{k=1}^{n}\Vert x_k-x_G\Vert^2.
    \end{align*}
Now using the Huygens--Steiner identity \eqref{eq:huy-stei} with $x=0$, we obtain the result.
\end{proof}

\begin{corollary}
Let $A_1,A_2,\ldots,A_n$ be points in the Euclidean plane, and let $G$ denote their centroid. For each $k$, let $G_{A_k}$ denote the centroid of the points other than $A_k$. Then
If $A_1,A_2,\ldots,A_n$ lie on a circle of radius $R$, then
\[
\sum_{k=1}^{n}\vert A_kG_{A_k}\vert^2
=
\frac{n^3}{(n-1)^2}
\left(
R^2-|OG|^2
\right).
\]
\end{corollary}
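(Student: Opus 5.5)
We will derive the identity by combining Proposition \ref{prop:disverremcent}, written in geometric form, with the Huygens--Steiner identity \eqref{eq:huy-stei} taken at the center of the circle. Here $O$ denotes the center of the circle, which the statement leaves implicit. Let $x_k$ be the position vector of $A_k$ relative to some origin, and let $x_G$ and $x_{G_k}$ be the position vectors of $G$ and $G_{A_k}$.

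First, the proof of Proposition \ref{prop:disverremcent} gives more than its stated conclusion. Since $x_{G_k}=(nx_G-x_k)/(n-1)$, we have
\[
x_k-x_{G_k}=\frac{n}{n-1}(x_k-x_G),
\]
and summing the squared norms over $k$ yields
\[
\sum_{k=1}^n |A_kG_{A_k}|^2=\frac{n^2}{(n-1)^2}\sum_{k=1}^n|A_kG|^2 .
\]

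Second, we evaluate $\sum_k |A_kG|^2$. Apply the Huygens--Steiner identity \eqref{eq:huy-stei} with $x$ equal to the position vector of $O$:
\[
\sum_k |A_kO|^2=\sum_k|A_kG|^2+n|OG|^2 .
\]
Since $|OA_k|=R$ for every $k$, the left side is $nR^2$. Hence $\sum_k|A_kG|^2=n(R^2-|OG|^2)$.

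Substituting this into the first display gives $\frac{n^3}{(n-1)^2}(R^2-|OG|^2)$, which is the claimed formula. Equivalently, one can use Corollary \ref{cor:sqsumsOG} together with Corollary \ref{cor:xkxgn}, since $\sum_{k<l}|A_kA_l|^2=n\sum_k|A_kG|^2=n^2(R^2-|OG|^2)$.

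The only step that needs care is the first: the intermediate relation $\sum_k|A_kG_{A_k}|^2=\frac{n^2}{(n-1)^2}\sum_k|A_kG|^2$ must be taken from the proof of Proposition \ref{prop:disverremcent}, not from its final statement. If one instead starts from the stated form with origin $O$, one needs $\sum_k\|x_k\|^2=nR^2$ and $\|x_G\|=|OG|$, which gives the same result. Beyond this, the argument involves no real obstacle.
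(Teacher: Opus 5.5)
Your argument is correct and is essentially the paper's proof. The paper applies Proposition~\ref{prop:disverremcent} directly with the origin placed at the center $O$, so that $\sum_k\|x_k\|^2=nR^2$ and $\|x_G\|=|OG|$, which is exactly your closing alternative; your main route just unpacks that proposition's proof by applying the Huygens--Steiner identity at $O$ rather than at the origin, which is the same computation.
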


\begin{proof}
Since $\sum_{k=1}^{n}|OA_k|^2=nR^2$, the result follows immediately from Proposition \ref{prop:disverremcent}.
\end{proof}

\section{Concluding Remarks}\label{sec:conrem}

Euler's quadrilateral theorem is naturally viewed as an identity in an inner product space rather than merely a metric property of planar quadrilaterals. In this paper, we established a generalized Euler identity valid for every finite collection of $n\geq4$ vectors in a real or complex inner product space. The classical theorem of Euler is recovered when $n=4$, while Douglas's theorem for an even number of vectors appears as a natural centroid formulation within the same framework.

Theorem \ref{thm:geneul} provides a unified extension of Euler's theorem for all $n\geq4$, thereby placing \,several previously known generalizations into a single algebraic setting. The corresponding \,geometric \,formulation, given in Corollary \ref{cor:geneulRm}, extends the midpoint structure underlying Euler's original \,theorem and expresses the identity in terms of distances between midpoint configurations.

In addition, we derived generalized versions of the parallelogram and Apollonius identities and interpreted them through centroid relations for finite point sets. Together, these results illustrate how classical metric identities can be unified and extended by elementary methods from the theory of inner product spaces, revealing further connections between geometric configurations and their underlying algebraic structure.

\medskip
\noindent \textbf{DISCLOSURE STATEMENT}. The author reports there is no conflict of interest.

\vspace{0.5cm}
\noindent
\thanks{\textbf{Acknowledgments}. The author acknowledges the use of computer algebra systems \textit{Mathematica} and \textit{GeoGebra} for various symbolic computations.}

\bibliographystyle{amsplain}  
\bibliography{references}

\end{document}